\newtheorem{thm}{Theorem}[section]
\newtheorem{cor}[thm]{Corollary}
\newtheorem{lem}[thm]{Lemma}
\theoremstyle{definition}
\newtheorem{defn}[thm]{Definition}
\theoremstyle{remark}
\numberwithin{equation}{section}
\newtheoremstyle{dotless}{}{}{}{}{\bfseries}{}{ }{}
\theoremstyle{dotless}
\newtheorem*{repthm}{Theorem}
\newtheorem*{repcor}{Corollary}
\newcommand{\co}{\colon\thinspace}
\def\R{{\mathbb R}}
\newcommand{\ra}{\rightarrow}
\begin{document}
\title {Explicit Equivalences between CAT(0) hyperbolic type geodesics}
\author{Harold Sultan}
\subjclass[2010]{20F65, 20F67}
\keywords{hyperbolic type geodesics, contracting geodesics, morse geodesics, slim geodesics}
\address{Department of Mathematics\\Brandeis University\\
Waltham\\MA 02453}
\email{HSultan@Brandeis.edu}
%\thanks{The author was partially supported by NSF grant number DMS-0804121.}
\date{\today}
%\commby{}%

\begin{abstract}
We prove an explicit equivalence between various hyperbolic type properties for quasi-geodesics in CAT(0) spaces.  Specifically, we prove that for $X$ a CAT(0) space and $\gamma \subset X$ a quasi-geodesic, the following four statements are equivalent and moreover the quantifiers in the equivalences are explicit:  (i) $\gamma$ is S-Slim, (ii) $\gamma$ is M(K,L)--Morse, (iii) $\gamma$ is (b,c)--contracting, and (iv) $\gamma$ is C--strongly contracting.  In particular, this explicit equivalence proves that for $f$ a $(K,L)$ quasi-isometry between CAT(0) spaces, and $\gamma$ a C--strongly contracting (K',L')--quasi-geodesic, then $f(\gamma)$ is a $C'(C,K,L,K',L')$--strongly contracting quasi-geodesic.  This result is necessary for a key technical point with regard to Charney's contracting boundary for CAT(0) spaces.
\end{abstract}
% -----------------------------------------------------------------------
\maketitle
% -----------------------------------------------------------------------

\tableofcontents

\section{Introduction and Overview} \label{sec:intro}
In the study of spaces of non-positive curvature, Euclidean and hyperbolic space represent the two classically well understood extreme ends of the spectrum.  More generally, in the literature a robust approach for studying spaces of interest is to identify particular directions, geodesics, or subspaces of the space in question which share features in common with one of these two prototypes.  In particular, with regard to identifying \emph{hyperbolic type geodesics} in spaces of interest, or geodesics which share features in common with geodesics in hyperbolic space, there are various well studied precise notions including being Morse, being contracting, and being slim.  Specifically, such studies have proven fruitful in analyzing right angled Artin groups \cite{behrstockcharney}, Teichm\"uller space \cite{behrstock,brockfarb,brockmasur,bmm,mosher}, the mapping class group \cite{behrstock}, CAT(0) spaces \cite{sultanmorse,behrstockdrutu,bestvina,charney}, and Out($F_{n}$) \cite{algomkfir} amongst others (See for instance\cite{drutumozessapir,drutusapir,kapovitchleeb,osin,mm1}). 

A Morse geodesic $\gamma$ is defined by the property that all quasi-geodesics $\sigma$ with endpoints on $\gamma$ remain within a bounded distance from $\gamma.$  A  strongly contracting geodesic has the property that metric balls disjoint from the geodesic have nearest point projections onto the geodesic with uniformly bounded diameter.  A geodesic is called slim if geodesic triangles with one edge along the geodesic are $\delta$-thin.  It is an elementary fact that in hyperbolic space, or more generally $\delta$-hyperbolic spaces, all quasi-geodesics are Morse, strongly contracting, and slim.  On the other hand, in product spaces such as Euclidean spaces of dimension two and above, there are no Morse, strongly contracting, or slim quasi-geodesics.

Building on results in \cite{sultanmorse}, in this paper we prove that the various aforementioned hyperbolic type properties are equivalent and moreover the quantifiers in the equivalences are explicit. 
\begin{repthm}$\textbf{\ref{thm:main}.}$(Main Theorem). \emph{
Let $X$ be a CAT(0) space and $\gamma \subset X$ a quasi-geodesic.  Then the following are equivalent:
\begin{enumerate}
\item $\gamma$ is (b,c)--contracting,
\item $\gamma$ is $C'$--strongly contracting,
\item $\gamma$ is $M$--Morse, and
\item $\gamma$ is $S$--slim
\end{enumerate}
Moreover, any one of the four sets of constants $\{(b,c),$C'$,M,S\}$ can be written in terms of any of the others.}
\end{repthm}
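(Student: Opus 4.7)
The plan is to establish the four-way equivalence through a cycle of explicit implications, say $(2) \Rightarrow (1) \Rightarrow (3) \Rightarrow (4) \Rightarrow (2)$, and then verify that every constant can be written in closed form in terms of the constants of any other property. The crucial geometric input throughout will be the CAT(0) convexity of the distance function to a geodesic, which allows quantitative control of both nearest point projections and of quasi-geodesic triangles.

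The step $(2) \Rightarrow (1)$ is essentially immediate: if $\gamma$ is $C$-strongly contracting, then a ball centered at any $x$ of radius strictly less than $d(x,\gamma)$ is disjoint from $\gamma$ and hence projects to a set of diameter at most $C$, which directly yields the $(b,c)$-contracting property with $b, c$ explicit in $C$. For $(1) \Rightarrow (3)$, I would invoke the arguments of \cite{sultanmorse}, where $(b,c)$-contracting is shown to imply Morse in CAT(0) spaces; the remaining task is to track the Morse constant $M(K,L)$ explicitly as a function of $b, c$ and the quasi-geodesic constants. The step $(3) \Rightarrow (4)$ is short: a geodesic triangle with one edge on $\gamma$ has its other two sides which are $(1,0)$-quasi-geodesics with endpoints on $\gamma$, so they lie in the $M$-neighborhood of $\gamma$, and combined with CAT(0) triangle thinness relative to Euclidean comparison triangles this yields a slimness constant $S$ explicit in $M$.

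The main obstacle is the closing step $(4) \Rightarrow (2)$: upgrading slimness all the way to \emph{strong} contraction, i.e., a uniform projection bound independent of the radius of the disjoint ball. Given a ball $B(y,r)$ disjoint from $\gamma$ containing two points $x_1, x_2$ with projections $p_1, p_2 \in \gamma$, my strategy is to consider the geodesic triangles with sides $[x_i, p_i]$ and $[p_1, p_2] \subset \gamma$, and then apply $S$-slimness iteratively (subdividing $[p_1,p_2]$ if necessary) to force $d(p_1, p_2)$ below a threshold depending only on $S$. The delicate point is that the bound must be uniform in $r$; a naive argument only produces a bound growing with $r$, which corresponds to the weaker $(b,c)$-contracting rather than strong contraction. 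Uniformity should follow by exploiting CAT(0) convexity of the distance function along the projection geodesics $[x_i, p_i]$, which forces the resulting thin triangles to behave essentially hyperbolically in a neighborhood of $\gamma$.

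Throughout, the overarching challenge is the quantitative bookkeeping: each of the four implications must output an explicit formula so that the four sets of constants $\{(b,c), C', M, S\}$ are mutually expressible, as demanded by the final assertion of the theorem. I would handle this by proving each implication with a named constant function and composing these functions at the end, taking care that no step introduces an implicit dependence that breaks the two-way quantitative relationship between any pair of parameter sets.
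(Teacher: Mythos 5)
Your outline has the right shape — a cycle of implications with explicit constants — but two of the four links would fail as sketched, and both failures are precisely the points where the paper does something more elaborate than a clean four-cycle.

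First, your step $(3)\Rightarrow(4)$ (Morse $\Rightarrow$ slim) contains a concrete error. You argue that in a geodesic triangle with one side on $\gamma$, ``its other two sides are $(1,0)$-quasi-geodesics with endpoints on $\gamma$.'' But in the defining configuration of slimness — a point $x\in X$, a point $y\in\gamma$, and the third vertex $\pi_\gamma(x)$ — the sides $[x,y]$ and $[x,\pi_\gamma(x)]$ each have only \emph{one} endpoint on $\gamma$, so the Morse property gives no information about them. The paper's corresponding step is $(\text{strongly contracting})+(\text{Morse})\Rightarrow(\text{slim})$: one sets $z=\pi_{[x,y]}(x')$ with $x'\in\pi_\gamma(x)$, observes that the concatenation $[x',z]\cup[z,y]$ \emph{does} have both endpoints on $\gamma$ (and is a $(3,0)$-quasi-geodesic by Lemma \ref{lem:quasigeodesic}), applies Morse to bound $d(z,\gamma)$, and — crucially — applies strong contraction to bound $d(x',\pi_\gamma(z))$ since $d(x,z)\le d(x,x')$. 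Without the contraction input you cannot relate $\pi_\gamma(z)$ back to $x'$, so slimness does not follow from Morse alone by this kind of argument.

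Second, for the closing step $(4)\Rightarrow(2)$ (slim $\Rightarrow$ strongly contracting), you correctly identify the difficulty — a naive argument only yields a projection bound that grows with the radius, i.e.\ $(b,c)$-contraction rather than strong contraction — but your proposed fix (``exploit CAT(0) convexity along the projection geodesics to force the triangles to behave hyperbolically near $\gamma$'') is only an aspiration, not an argument, and it is exactly the hard quantitative upgrade the paper deliberately avoids attempting head-on. The paper proves only $(4)\Rightarrow(\text{$(b,c)$-contracting})$, which is the tractable statement, and then chains $(b,c)$-contracting $\Rightarrow$ Morse $\Rightarrow$ strongly contracting. The implication Morse $\Rightarrow$ strongly contracting is the heart of the paper and is not trivial: it requires the auxiliary concatenation result (Lemma \ref{lem:genquasi}), a case analysis, and a bootstrap through $(b,c)$-contraction with $b$ driven toward $1$ (Lemma \ref{lem:close}). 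Your proposal leaves all of this implicit. Finally, you do not address the issue that nearest-point projection onto a \emph{quasi-}geodesic is not well defined; the paper handles this with a separate reduction (Lemma \ref{lem:reduce} and Corollary \ref{cor:reduce}) that lets it argue on genuine geodesic subsegments. Without some such reduction the quantifiers in your constant-tracking would silently pick up a dependence on the choice of projection.
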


Theorem \ref{thm:main} should be considered in the context of related theorems in \cite{bestvina,behrstock,charney,drutumozessapir,kapovitchleeb,sultanmorse} among others.  In particular, in \cite{bestvina} geodesics with property (2) are studied and in fact among other things it is shown that for the case of $\gamma$ a geodesic (2)$\implies$(4).  In \cite{charney} geodesics with property (2) are studied and it is shown that (2)$\implies$(3), an explicit proof of which also appears in \cite{algomkfir}.  In \cite{drutumozessapir} geodesics with property (3) are studied.  In \cite{sultanmorse} building on work of the previous authors it is shown that properties (1),(2), and (3) are equivalent, although the proof relies on limiting arguments and hence the constants of the equivalence could not be recovered.

As a corollary of Theorem \ref{thm:main} we highlight the following consequence, which in fact served as motivation for the results in this paper. 
\begin{repcor} $\textbf{\ref{cor:quasiisometry}.}$ 
\emph{Let $X$ be a CAT(0) space, $\gamma \subset X$ a C--strongly contracting (K',L')--quasi-geodesic, and $f:X \ra X$ a $(K,L)$ quasi-isometry.  Then $f(\gamma)$ is $C'(C,K,L,K',L')$--strongly contracting quasi-geodesic. }
\end{repcor}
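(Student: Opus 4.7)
The plan is to use the Main Theorem as a bridge: strong contraction is not obviously preserved under quasi-isometry (indeed, this is precisely the technical point that motivates the paper), but the Morse property is manifestly quasi-isometry invariant, since it is defined purely in terms of quasi-geodesics. So I would translate $C$-strong contraction into an $M$-Morse condition, transport the Morse condition across $f$, and then translate back into strong contraction using Theorem \ref{thm:main} in the reverse direction.

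More concretely, first I would apply Theorem \ref{thm:main} to obtain from the $C$-strong contraction of $\gamma$ (together with its $(K',L')$ quasi-geodesic constants) an explicit Morse gauge $M = M(C,K',L')$ such that $\gamma$ is $M$-Morse. Second, I would note the standard observation that $f(\gamma)$ is itself a $(K'',L'')$ quasi-geodesic with $K'',L''$ depending only on $K,L,K',L'$, since quasi-isometries compose with quasi-geodesics to give quasi-geodesics.

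The central step is proving that the Morse property transfers across $f$ with explicit control. Given a quasi-geodesic $\sigma$ with endpoints on $f(\gamma)$, I would push $\sigma$ through a quasi-inverse $g$ of $f$ (whose quasi-isometry constants depend only on $K, L$) to obtain a quasi-geodesic $g(\sigma)$ whose endpoints are within a universally bounded distance of $\gamma$. Extending $g(\sigma)$ by short segments to land exactly on $\gamma$ yields a quasi-geodesic (with updated, explicit constants) with endpoints on $\gamma$, which by the $M$-Morse property of $\gamma$ lies in an $M$-neighborhood of $\gamma$. Applying $f$ to this inclusion, and absorbing the additive error $L$ and multiplicative factor $K$ from $f$, shows that $\sigma$ lies in an $M'$-neighborhood of $f(\gamma)$ for an explicit Morse gauge $M' = M'(M,K,L)$. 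Hence $f(\gamma)$ is $M'$-Morse.

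Finally, I would apply Theorem \ref{thm:main} a second time, in the Morse-implies-strongly-contracting direction, to the $M'$-Morse quasi-geodesic $f(\gamma)$ (with its $(K'',L'')$ constants), producing an explicit strong-contraction constant $C' = C'(M',K'',L'') = C'(C,K,L,K',L')$. The main obstacle to overcome is precisely the ``transport'' step in the previous paragraph: bookkeeping the quasi-geodesic and Morse constants through the quasi-inverse, the endpoint adjustment, and the pushforward by $f$. Everything else is essentially a double application of the explicit equivalences in Theorem \ref{thm:main}.
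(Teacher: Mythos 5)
Your proposal is correct and takes essentially the same route as the paper: translate strong contraction to the Morse property via Theorem \ref{thm:main}, observe that the Morse property is manifestly quasi-isometry invariant with explicit control on the gauge, and translate back. The paper treats the middle step as immediate from the definition, while you spell out the quasi-inverse bookkeeping, but the underlying argument is the same.
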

In particular, Corollary \ref{cor:quasiisometry} is very useful in \cite{charney} where it is used to show that self quasi-isometries of CAT(0) spaces give rise to continuous maps on Charney's contracting boundary for CAT(0) spaces.

 \subsection*{Acknowledgements}
$\\$
$\indent$
I want to thank Ruth Charney for motivating conversations as well as Michael Carr and Thomas Koberda for useful conversations and insights regarding arguments and ideas in this paper.

\section{Background}\label{sec:back}
\subsection{Quasi-geodesics and CAT(0) spaces}
\begin{defn}[quasi-geodesic] \label{defn:quasi} A \emph{(K,L) quasi-geodesic} $\gamma \subset X$ is the image of a map $\gamma:I \ra X$ where $I$ is a connected interval in $\R$ (possibly all of $\R$) such that $\forall s,t \in I$ we have the following quasi-isometric inequality:
\begin{equation} \label{eq:qi} \frac{|s-t|}{K}-L \leq  d_{X}(\gamma(s),\gamma(t)) \leq K|s-t|+L
\end{equation} 
We refer to the quasi-geodesic $\gamma(I)$ by $\gamma,$ and when the constants $(K,L)$ are not relevant omit them.   
\end{defn}

%An arbitrary quasi-geodesic in any geodesic metric space can be replaced by a continuous rectifiable quasi-geodesic by replacing the quasi-geodesic with a piecewise geodesic path connecting consecutive integer valued parameter points of the original quasi-geodesic.  It is clear that this replacement process yields a continuous rectifiable quasi-geodesic which is in a bounded Hausdorff neighborhood of the original quasi-geodesic.  When doing so will not affect an argument, by replacement if necessary we will assume quasi-geodesics are continuous and rectifiable.  One upshot of the assumption of continuous quasi-geodesics is that the for $\gamma,\sigma$ quasi-geodesics, the distance function $\psi(t)=d(\gamma(t),\sigma)$ is continuous.  More generally, for non-continuous quasi-geodesics this distance function can have jump discontinuities controlled by the constants of the quasi-geodesics.  Throughout, for $\gamma$ any continuous and rectifiable path, we will denote its length by $|\gamma|.$  

CAT(0) spaces are geodesic metric spaces defined by the property that triangles are no ``fatter'' than the corresponding comparison triangles in Euclidean space.  In particular, using this property one can prove the following lemma, see \cite[Section II.2]{bridson} for details. 

\begin{lem} Let $X$ be a CAT(0) space.
\begin{enumerate} \label{lem:cat}
\item [C1:] (Projections onto convex subsets). Let $C$ be a convex subset, complete in the induced metric, then there is a well-defined distance non-increasing nearest point projection map $\pi_{C}\co X \ra C.$  In particular, $\pi_{C}$ is continuous.  We will often consider the case where $C$ is a geodesic. 
\item [C2:] (Convexity). Let $c_{1}\co [0,1] \ra X$ and $c_{2}\co [0,1] \ra X$ be any pair of geodesics parameterized proportional to arc length.  Then the following inequality holds for all $t\in [0,1]:$
$$d(c_{1}(t),c_{2}(t)) \leq (1-t) d(c_{1}(0),c_{2}(0)) + td(c_{1}(1),c_{2}(1))  $$
%\item[C3:] (Unique geodesic space).  $\forall x,y \in X$ there is a unique geodesic connecting $x$ and $y.$
\end{enumerate}
\end{lem}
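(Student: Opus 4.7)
The plan is to derive both parts from the defining CAT(0) property that geodesic triangles are no ``fatter'' than their Euclidean comparison triangles --- equivalently, the midpoint ($\mathrm{CN}$) inequality
\[
d(x,m)^2 \le \tfrac{1}{2}d(x,y)^2 + \tfrac{1}{2}d(x,z)^2 - \tfrac{1}{4}d(y,z)^2,
\]
with $m$ the midpoint of $[y,z]$ --- together with the direct comparison statement that distances between points on two sides of a geodesic triangle are bounded by the corresponding distances in the Euclidean comparison triangle.

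For C1, I would first establish existence and uniqueness of the nearest point. Given $x\in X$ and a minimizing sequence $\{c_n\}\subset C$, convexity of $C$ places the midpoints of the geodesics $[c_n,c_m]$ inside $C$, and the $\mathrm{CN}$ inequality applied to the triple $(x,c_n,c_m)$ forces $d(c_n,c_m)\to 0$; completeness of $C$ then delivers a limit $p$ realizing the minimum, and the same midpoint estimate applied to two purported minimizers rules out non-uniqueness. For the $1$-Lipschitz bound, set $p=\pi_C(x)$ and $q=\pi_C(y)$: minimality of $p$, combined with CAT(0) comparison on short sub-triangles along $[p,q]$, forces the Alexandrov angle $\angle_p(x,q)$ to be at least $\pi/2$, and symmetrically $\angle_q(y,p)\ge\pi/2$. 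Comparing the resulting quadrilateral $(x,p,q,y)$ with its planar majorant then yields $d(p,q)\le d(x,y)$.

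For C2, I would introduce the intermediate point $p$ at parameter $t$ on the geodesic $[c_1(0),c_2(1)]$ (equivalently, at parameter $1-t$ from $c_2(1)$) and reduce to two applications of CAT(0) comparison. The triangle $(c_1(0),c_1(1),c_2(1))$ contains $c_1(t)$ and $p$ at common parameter $t$ from the shared vertex $c_1(0)$, and Euclidean comparison gives $d(c_1(t),p)\le t\,d(c_1(1),c_2(1))$; the triangle $(c_1(0),c_2(0),c_2(1))$ contains $c_2(t)$ and $p$ at common parameter $1-t$ from the shared vertex $c_2(1)$, giving $d(p,c_2(t))\le (1-t)\,d(c_1(0),c_2(0))$. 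Summing via the triangle inequality $d(c_1(t),c_2(t))\le d(c_1(t),p)+d(p,c_2(t))$ recovers the claim.

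The most delicate step is the angle estimate in C1: one must show that no geodesic leaving $p$ into $C$ can decrease $d(\cdot,x)$ to first order. This requires both completeness (to realize the infimum) and convexity (to average near-minimizers inside $C$), and the cleanest route passes again through the $\mathrm{CN}$ inequality applied to sub-geodesics of vanishing length near $p$ --- which is why the two parts of the lemma are most naturally established in tandem.
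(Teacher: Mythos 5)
The paper does not actually prove this lemma; it cites Bridson--Haefliger \cite[Section II.2]{bridson} for it, and your sketch reconstructs precisely the standard argument from that reference: the $\mathrm{CN}$ (midpoint) inequality plus convexity and completeness of $C$ for existence and uniqueness of the projection, the right-angle condition $\angle_p(x,q)\ge\pi/2$ at the foot of the projection for the $1$-Lipschitz bound, and the intermediate point $p\in[c_1(0),c_2(1)]$ with two triangle comparisons plus the triangle inequality for the convexity estimate in C2. The outline is correct; the only step that needs a little extra care when written out in full is turning the two right-angle conditions at $p$ and $q$ into $d(p,q)\le d(x,y)$ (via Alexandrov's lemma, the flat-quadrilateral argument, or by first establishing C2 and then using convexity of $t\mapsto d(c_1(t),c_2(t))$ together with the first-variation/angle bound to see that this function is nondecreasing), which is exactly how Bridson--Haefliger handle it.
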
  
\subsection{Hyperbolic type quasi-geodesics}  In this section we define the hyperbolic types of quasi-geodesics we will consider in this paper.  The following definition of Morse (quasi-)geodesics has roots in the classical paper \cite{morse}:
\begin{defn}[Morse quasi-geodesics] A (quasi-)geodesic $\gamma$ is called an M--\emph{Morse (quasi-)geodesic} if for every $(K,L)$-quasi-geodesic $\sigma$ with endpoints on $\gamma,$ we have $\sigma \subset N_{M(K,L)}(\gamma).$   That is, $\sigma$ is within a bounded distance, $M=M(K,L),$ from $\gamma,$ with the bound depending only on the constants $K,L.$  In the literature, Morse (quasi-)geodesics are sometimes referred to as \emph{stable quasi-geodesics}.
\end{defn} 

The following generalized notion of contracting quasi-geodesics can be found for example in \cite{behrstock,brockmasur}, and is based on a slightly more general notion of (a,b,c)--contraction found in \cite{mm1} where it serves as a key ingredient in the proof of the hyperbolicity of the curve complex.
\begin{defn}[contracting quasi-geodesics] A (quasi-)geodesic $\gamma$ is said to be \emph{(b,c)--contracting} if $\exists$ constants $0<b\leq 1$ and $0<c$ such that $\forall x,y \in X,$ $$d_{X}(x,y)<bd_{X}(x,\pi_{\gamma}(x))  \implies d_{X}(\pi_{\gamma}(x),\pi_{\gamma}(y))<c.$$
For the special case of a (b,c)--contracting quasi-geodesic where $b$ can be chosen to be $1,$ the quasi-geodesic $\gamma$ is called \emph{c--strongly contracting.}
\end{defn}  

The following elementary lemma shows that given a $(b,c)$--contracting quasi-geodesic one can increase $b$ to be arbitrarily close to $1$ at the expense of increasing $c.$
\begin{lem} \label{lem:close}
If $\gamma$ is (b,c)--contracting quasi-geodesic, then for any arbitrarily small $\epsilon>0,$ the quasi-geodesic $\gamma$ is ($1-\epsilon,c'(\epsilon,b,c))$--contracting. 
\end{lem}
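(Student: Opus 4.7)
The strategy is a straightforward chaining argument: break a geodesic from $x$ to $y$ into many short subsegments, apply the $(b,c)$-contracting hypothesis to each, and sum the resulting projection displacements via the triangle inequality.

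Fix $x,y \in X$ satisfying the strengthened hypothesis $d(x,y) < (1-\epsilon)\,d(x,\pi_\gamma(x))$, and write $D := d(x,\pi_\gamma(x))$. Using the CAT(0) assumption, pick a geodesic segment $[x,y]$ and subdivide it into $n$ equal pieces with endpoints $x = z_0, z_1, \ldots, z_n = y$, so that $d(z_i, z_{i+1}) = d(x,y)/n$; the integer $n$ will be chosen to depend only on $\epsilon$ and $b$.

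Two elementary estimates drive the argument. First, for any $p \in \gamma$ and any $z_i \in [x,y]$, the triangle inequality gives $d(z_i, p) \geq d(x,p) - d(x, z_i) \geq D - d(x,y) > \epsilon D$, so $d(z_i, \pi_\gamma(z_i)) > \epsilon D$. Second, taking $n := \lceil (1-\epsilon)/(b\epsilon) \rceil$, we have
\[ d(z_i, z_{i+1}) \;=\; \frac{d(x,y)}{n} \;<\; \frac{(1-\epsilon)D}{n} \;\leq\; b\epsilon D \;<\; b\,d(z_i, \pi_\gamma(z_i)), \]
so the $(b,c)$-contracting hypothesis applies to each consecutive pair, yielding $d(\pi_\gamma(z_i), \pi_\gamma(z_{i+1})) < c$. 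Telescoping via the triangle inequality then gives $d(\pi_\gamma(x), \pi_\gamma(y)) < nc$, and we simply set $c'(\epsilon, b, c) := nc = \lceil (1-\epsilon)/(b\epsilon)\rceil \cdot c$.

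The one subtlety to watch is that the lower bound $d(z_i,\pi_\gamma(z_i)) > \epsilon D$ must be obtained purely from the triangle inequality and not by invoking the very contracting property we are attempting to upgrade---otherwise the reasoning would be circular. Beyond that, the calculation is entirely routine, and the order-of-magnitude dependence $c' \sim c/(b\epsilon)$ blows up as $\epsilon \to 0$, as one should expect.
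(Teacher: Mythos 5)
Your proof is correct, and it takes a genuinely different route from the paper's. The paper bootstraps the contracting constant directly: starting from $(b,c)$-contracting, it observes (by picking an intermediate point on $[x,y]$ at distance roughly $b\,d(x,\pi_\gamma(x))$ from $x$ and applying the hypothesis twice) that $\gamma$ is $\bigl(b+b(1-b),2c\bigr)$-contracting, then iterates; the improved $b$'s form the partial sums of the geometric series $\sum_{i\geq 0} b(1-b)^{i}$, which converge to $1$. Your argument instead fixes $\epsilon$ first, subdivides $[x,y]$ into $n=\lceil(1-\epsilon)/(b\epsilon)\rceil$ equal pieces, and chains the hypothesis across the pieces, using the elementary triangle-inequality bound $d(z_i,\pi_\gamma(z_i))>\epsilon\,d(x,\pi_\gamma(x))$ to see that each consecutive pair is in range. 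Both are chaining arguments, and both correctly avoid the circularity you flag; the difference is that the paper's geometric partition gives $c'\sim c\log(1/\epsilon)/\log(1/(1-b))$, which degrades only logarithmically as $\epsilon\to 0$, whereas your equal-length partition gives $c'\sim c/(b\epsilon)$, degrading polynomially. For the purposes of Theorem \ref{thm:main}, where $\epsilon$ is ultimately taken to be a fixed constant like $1/100$ or $1/4$, either bound is perfectly adequate, and your version has the virtue of being entirely explicit in a single step rather than presented as a limit of an iteration.
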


\begin{proof}
Notice that if $\gamma$ is $(b,c)$--contracting, then it is also $(b+b(1-b),2c)$--contracting.  Similarly, it is also $(b+b(1-b)+b(1-b)^{2},3c)$--contracting.  Iterating this process, the statement of the lemma follows, as for $0<b<1$ the sum of the geometric series $\sum_{i=0}^{\infty}b(1-b)^{i}$ converges to $1.$  
%Fix $x\in X,$  let $y\in X$ be such that $d(x,y) <d(x,\pi_{\gamma}(x)).$  Let $\sigma:[0,1] \ra X$ the geodesic $[x,y]$ parameterized proportional to arclength.         
\end{proof}

Finally, the following definition of a slim quasi-geodesic is introduced in \cite{bestvina}.
\begin{defn}[slim quasi-geodesics] \label{def:slim}A (quasi-)geodesic $\gamma$ is said to be \emph{S--slim} if $\exists$ constant $S$ such that for all $x \in X$ and $y \in \gamma,$ we have:
$$d(\pi_{\gamma}(x),[x,y]) \leq S.$$
\end{defn}

Note that if $\gamma$ is an S--slim quasi-geodesic, then $$ |[x,\pi_{\gamma}(x)]| + |[\pi_{\gamma}(x),y]|-2S \leq |[x,y]| \leq  |[x,\pi_{\gamma}(x)]| + |[\pi_{\gamma}(x),y]|.$$   Moreover, if $z \in [x,y]$ is a point such that $d(y,z)=d(y,\pi_{\gamma}(x))$ (or similarly such that $d(x,z)=d(x,\pi_{\gamma}(x))$), then $d(z,\pi_{\gamma}(x))\leq 2S.$  

We conclude this section by citing a lemma relating contracting and slim geodesics.
\begin{lem}[\cite{bestvina} Lemma 3.5] \label{lem:bestvina} Let $\gamma$ be a C--strongly contracting geodesic in a CAT(0) space.  Then $\gamma$ is $(3C+1)$-slim. 
\end{lem}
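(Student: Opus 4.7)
The plan is a contradiction argument. Suppose, toward a contradiction, that $d(p, [x, y]) > 3C + 1$ with $p = \pi_{\gamma}(x)$, and let $z^{*} \in [x, y]$ realize $d(p, [x, y]) = d(p, z^{*})$. I would first rule out $z^{*}$ being an endpoint of $[x, y]$: if $z^{*} = x$, then the angle at $x$ between $[x, y]$ and $[x, p]$ is at least $\pi/2$ (since $z^{*}$ minimizes $d(p, \cdot)$ on $[x, y]$), while the angle at $p$ between $[p, x]$ and $[p, y]$ is at least $\pi/2$ (since $p = \pi_{\gamma}(x)$ and $[p, y] \subset \gamma$); the CAT(0) angle-sum bound then forces the triangle $\triangle xpy$ to be degenerate, compelling $x \in \gamma$ and contradicting $d(x, p) > 3C + 1 > 0$. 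A symmetric argument handles $z^{*} = y$. Consequently $z^{*}$ is interior to $[x, y]$, and the CAT(0) comparison with right angles at $z^{*}$ yields the right-angle inequalities $d(x, p)^{2} \geq d(x, z^{*})^{2} + d(z^{*}, p)^{2}$ and $d(y, p)^{2} \geq d(y, z^{*})^{2} + d(z^{*}, p)^{2}$.

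The crux is a dichotomy on $d(q, p)$, where $q := \pi_{\gamma}(z^{*})$. If $d(q, p) \geq C$, then the contrapositive of the $C$-strong contracting condition applied to the pair $(x, z^{*})$ (viewing $x$ as the distinguished point) forces $d(x, z^{*}) \geq d(x, \pi_{\gamma}(x)) = d(x, p)$; substituting into the first right-angle inequality yields $d(x, p)^{2} \geq d(x, p)^{2} + d(z^{*}, p)^{2}$, forcing $d(z^{*}, p) = 0$, which contradicts $d(z^{*}, p) > 3C + 1$. If instead $d(q, p) < C$, then since $d(q, y) \geq d(p, y) - d(q, p) > 2C + 1 > C$, the contrapositive applied to $(z^{*}, y)$ gives $d(z^{*}, y) \geq d(z^{*}, \gamma)$; combined with the triangle inequality $d(z^{*}, p) \leq d(z^{*}, \gamma) + d(q, p)$, this forces $d(z^{*}, \gamma) > 2C + 1$. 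I would then iterate strong contracting along $[x, y]$ from $z^{*}$ toward $y$, controlling each step by the CAT(0) convexity of $t \mapsto d(z(t), \gamma)$ along $[x, y]$ from Lemma \ref{lem:cat} (C2), to derive the desired contradiction.

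The main obstacle lies in this complementary case: each iteration of strong contracting contributes roughly $C$ to the tracked projection displacement while decreasing the buffer $d(z(t), \gamma)$, and the challenge is to show the process produces a contradiction in a controlled number of steps rather than one whose bound grows with $d(x, y)$. The specific constant $3C + 1$ is consistent with this strategy, reflecting roughly three $C$-contributions from strong contracting (one for the initial projection offset and two propagated through iteration), with the $+1$ absorbing strictness issues coming from the open inequality in the definition of $C$-strongly contracting.
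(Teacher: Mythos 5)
The paper does not prove this lemma itself; it cites it directly from Bestvina--Fujiwara \cite{bestvina} (Lemma 3.5), so there is no in-paper proof to compare against. Evaluating your argument on its own merits: your Case~1 (the case $d(q,p) \geq C$, where $q = \pi_\gamma(z^*)$) is correct and clean — the contrapositive of strong contraction forces $d(x,z^*) \geq d(x,p)$, and feeding this into the CAT(0) Pythagorean inequality $d(x,p)^2 \geq d(x,z^*)^2 + d(z^*,p)^2$ at the right angle at $z^*$ collapses $d(z^*,p)$ to zero, a contradiction. The exclusion of endpoint minimizers via the two-right-angles degeneracy argument is also essentially fine, though you should really enumerate all three degeneracy cases (depending on which vertex is interior to the segment), not just the one forcing $x \in \gamma$; each yields $d(p,[x,y]) = 0$.

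The problem is Case~2, $d(q,p) < C$, which you acknowledge is unfinished — and this is not a loose end but the heart of the lemma. The iteration you sketch (step along $[z^*,y]$ by roughly $d(z_k,\gamma)$ per step, gaining at most $C$ of projection drift per step, appealing to convexity of $t \mapsto d(\cdot,\gamma)$) faces exactly the obstacle you name: there is nothing yet that bounds the number of steps independently of $d(x,y)$. Each step shrinks $d(z_k,\gamma)$ only by a factor depending on the ratio $d(z_k,\gamma)/d(z_k,y)$, which can be arbitrarily small when $y$ is far away, so the total projection drift $\sum_k C$ is a~priori unbounded. The heuristic accounting you give for why $3C+1$ ``should'' emerge (three $C$-contributions plus slack) is a reasonable sanity check on the shape of the answer, but it is not a proof, and the actual argument needs an additional idea to control the iteration — or must avoid iterating altogether. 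The argument in \cite{bestvina} does not proceed by open-ended iteration; it makes a small, fixed number of contraction applications combined with CAT(0) comparison. As it stands, your proposal proves the lemma only in the subcase $d(q,p) \geq C$, and the complementary (main) case has a genuine gap.
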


\section{Main Theorem and Proof}
Throughout this section we will assume we are in the setting of a CAT(0) metric space X.  The following two elementary lemmas regarding the concatenation of geodesic segments will be useful in the proof of Theorem \ref{thm:main}.
\begin{lem}\label{lem:quasigeodesic}
For any triple of points $a,b,c \in X,$ the concatenated path $$\phi=[a,\pi_{[b,c]}(a)] \cup [\pi_{[b,c]}(a),c],$$ is a (3,0) quasi-geodesic.  
\end{lem}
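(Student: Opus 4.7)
The plan is to set $p := \pi_{[b,c]}(a)$, parametrize $\phi$ by arc length, and check the quasi-isometric inequalities of Definition~\ref{defn:quasi} with $(K,L) = (3,0)$. The upper bound $d_X(\phi(s),\phi(t)) \leq 3|s-t|$ will be immediate, since the arc-length parametrization of a concatenation of geodesic segments is $1$-Lipschitz. All of the content therefore lies in verifying the lower bound $|s-t| \leq 3\, d_X(\phi(s),\phi(t))$.

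For the lower bound I would first reduce to the case in which $\phi(s)$ and $\phi(t)$ lie on opposite sides of the corner $p$: if they lie on the same one of the two geodesic segments then $d_X(\phi(s),\phi(t)) = |s-t|$ and the bound is trivial. In the remaining case, write $q = \phi(s) \in [a,p]$ and $r = \phi(t) \in [p,c]$, and set $\alpha := d(q,p)$ and $\beta := d(p,r)$, so that $|s-t| = \alpha + \beta$.

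The crux of the argument is that $\phi$ turns through an angle of at least $\pi/2$ at $p$, i.e.\ the (Alexandrov) angle $\angle_p(q,r)$ is at least $\pi/2$. This is a standard consequence of the CAT(0) projection property recorded in Lemma~\ref{lem:cat}: since $p = \pi_{[b,c]}(a)$ realizes the minimum of $y \mapsto d(a,y)$ on the convex set $[b,c]$, the variational characterization of the projection yields $\angle_p(a,c) \geq \pi/2$; and since $q$ lies on $[p,a]$ and $r$ on $[p,c]$, the angles $\angle_p(q,r)$ and $\angle_p(a,c)$ agree. The CAT(0) comparison inequality applied to $\triangle qpr$ then produces
\[
d(q,r)^2 \;\geq\; \alpha^2 + \beta^2 - 2\alpha\beta \cos\angle_p(q,r) \;\geq\; \alpha^2 + \beta^2,
\]
and the elementary bound $\sqrt{\alpha^2 + \beta^2} \geq (\alpha+\beta)/\sqrt{2} \geq (\alpha+\beta)/3$ closes the argument.

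There is no serious obstacle here; the only non-automatic step is the angle bound at $p$, which is a standard but slightly-off-stage fact about CAT(0) nearest-point projections. The same method in fact delivers the sharper constant $(\sqrt{2},0)$, but the cruder $(3,0)$ stated in the lemma is convenient and clearly sufficient for the downstream applications.
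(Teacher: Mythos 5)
Your proof is correct, but it runs a genuinely different argument than the paper's. Writing $p = \pi_{[b,c]}(a)$ and taking $x \in [a,p]$ and $y \in [p,c]$, the paper observes that $\pi_{[b,c]}(x) = p$ (so $d(x,p) \leq d(x,y)$, since $y \in [b,c]$) and then applies a plain triangle inequality: $d_\phi(x,y) = d(x,p) + d(p,y) \leq d(x,p) + \bigl(d(p,x)+d(x,y)\bigr) = 2d(x,p) + d(x,y) \leq 3d(x,y)$. No angle or comparison-triangle machinery appears. You instead invoke the $\pi/2$ lower bound on the Alexandrov angle at the foot of the projection together with the CAT(0) law-of-cosines comparison; this trades the paper's elementary projection-plus-triangle-inequality step for a slightly more geometric one and buys the sharper constant $(\sqrt{2},0)$. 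Both approaches rest on standard facts about CAT(0) nearest-point projection, but different ones: the paper needs only the distance-nonincreasing property already recorded as property [C1] of Lemma~\ref{lem:cat}, while you use the obtuse-angle characterization, which the paper does not explicitly cite. The improvement to $\sqrt{2}$ is correct but immaterial downstream.
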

\begin{proof}
We must show that $\forall x,y \in \phi,$ the (3,0)--quasi-isometric inequality of Equation \ref{eq:qi} is satisfied.  Since $\phi$ is a concatenation of two geodesic segments, without loss of generality we can assume $x \in  [a,\pi_{[b,c]}(a)], y \in [\pi_{[b,c]}(a),c].$  Since $x \in [a,\pi_{[b,c]}(a)]$ it follows that $\pi_{[b,c]}(x)=\pi_{[b,c]}(a),$ and hence $d(x,\pi_{[b,c]}(a))\leq d(x,y).$  Let $d_{\phi}(x,y)$ denote the distance along $\phi$ between $x$ and $y.$  

Then, the following inequality completes the proof:
\begin{eqnarray*}
d(x,y) \leq d_{\phi}(x,y) &=& d(x,\pi_{[b,c]}(a)) + d(\pi_{[b,c]}(a),y) \\
& \leq& d(x,\pi_{[b,c]}(a)) + \left(d(\pi_{[b,c]}(a),x) + d(x,y) \right) \\ 
 &\leq& 2d(x,\pi_{[b,c]}(a)) + d(x,y)  \leq 3d(x,y)
\end{eqnarray*} 
\end{proof}

Building on Lemma \ref{lem:quasigeodesic}, presently we will prove a lemma which ensures that the concatenation of five geodesic segments under certain hypothesis is a quasi-geodesic with controlled quasi-constants.

Let $\gamma$ be a geodesic, and $x,y \in X.$  Set $D=d(\pi_{\gamma}(x),\pi_{\gamma}(y)).$  Let $a,b,c$ be constants such $d(x,\pi_{\gamma}(x)) = aD, d(x,y) = bD, d(y,\pi_{\gamma}(y)) = cD.$  See Figure \ref{fig:cases}.  Note that by property [C1] of Lemma \ref{lem:cat}, $b \geq 1.$  Consider the continuous function $\rho_{1}(z)=d([x,\pi_{\gamma}(x)],z).$  If we restrict the function $\rho_{1}$ to the geodesic $[x,y],$ by definition $\rho_{1}(x)=0,\rho_{1}(y) \geq D,$ where the latter inequality follows from property [C1] of Lemma \ref{lem:cat}.  Then by intermediate value theorem there is some point $s \in [x,y]$ such that $\rho_{1}(s)=\frac{D}{4}.$  Moreover, we can assume $s \in [x,y]$ is the point in $[x,y]$ closest to $y$ such that $\rho_{1}(s)=\frac{D}{4}.$  Similarly, we can define the continuous function $\rho_{2}(z)=d([y,\pi_{\gamma}(y)],z).$  If we restrict the function $\rho_{2}$ to the geodesic $[x,y],$ then as above the intermediate value theorem ensures there is some point $t \in [x,y]$ such that $\rho_{2}(t)=\frac{D}{4},$ and moreover we can assume $t \in [x,y]$ is the point in $[x,y]$ closest to $x$ such that $\rho_{2}(t)=\frac{D}{4}.$  Notice that since $d(s,[x,\pi_{\gamma}(x)])=\frac{D}{4},d(t,[y,\pi_{\gamma}(y)])=\frac{D}{4},$ and $d(x,y) \geq d(\pi_{\gamma}(x),\pi_{\gamma}(y)=D,$ it follows that $s$ precedes $t$ along the geodesic $[x,y].$  In fact, if follows that $d(s,t)\geq \frac{D}{2}.$  

Let $r$ be a point in $[x,\pi_{\gamma}(x)]$ such that $d(s,r)=\frac{D}{4}.$  Similarly, let $u$ be a point in $[y,\pi_{\gamma}(y)]$ such that $d(t,u)=\frac{D}{4}.$  Note that $r,u$ are uniquely defined as they are nearest point projections, that is $\pi_{[x,\pi_{\gamma}(x)]}(s)=r$ and similarly $\pi_{[y,\pi_{\gamma}(y)]}(t)=u.$  Furthermore, by construction we similarly have that $\pi_{[s,y]}(r)=s$ and $\pi_{[x,t]}(u)=t.$  

\begin{figure}[htpb] 
\centering
\includegraphics[height=5 cm]{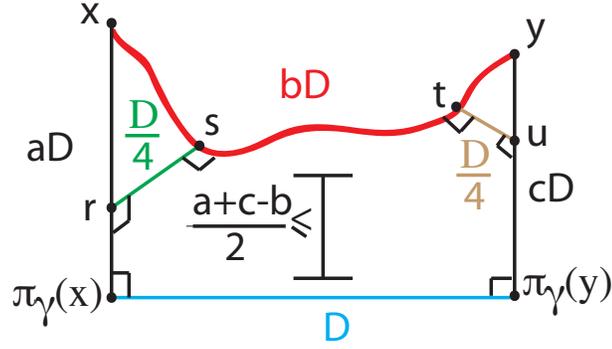}
\caption{Illustration of Lemma \ref{lem:genquasi}.}\label{fig:cases}
\end{figure}        

\begin{lem} \label{lem:genquasi}
In the situation described above, the concatenation $$\phi=[\pi_{\gamma}(x),r] \cup [r,s] \cup [s,t] \cup [t,u] \cup [u,\pi_{\gamma}(y)],$$ is a $(\left(1+4(a+b+c)\right), 0)$-quasi-geodesic.  In particular, if $a+c>b$ and $\gamma$ is M(K,L)-Morse, then $$D \leq \frac{2M(1+4(a+b+c),0)}{a+c-b}.$$
\end{lem}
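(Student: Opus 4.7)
The plan is first to prove the quasi-geodesic claim by a case analysis on which subsegments contain the points $p, q \in \phi$, and then to derive the bound on $D$ via a short triangle-inequality computation that uses the Morse hypothesis.

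I would begin by bounding $|\phi| \leq aD + D/4 + bD + D/4 + cD = (\tfrac{1}{2} + a + b + c)D$, using $r \in [x, \pi_\gamma(x)]$, $u \in [y, \pi_\gamma(y)]$, and $[s,t] \subset [x,y]$. Label the five subsegments of $\phi$ by $\sigma_1, \ldots, \sigma_5$. For $p, q$ on the same $\sigma_i$ the quasi-isometric inequality is trivial; for adjacent subsegments, each of the four projection identities in the setup (for instance $\pi_{[x, \pi_\gamma(x)]}(s) = r$) is precisely the hypothesis of Lemma \ref{lem:quasigeodesic}, which renders the corresponding two-segment piece a $(3, 0)$-quasi-geodesic. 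The heart of the argument is the non-adjacent case, where I would use three CAT(0) projection facts to establish a positive floor on $d(p, q)$: (i) for any $p \in [x, \pi_\gamma(x)]$ one has $\pi_\gamma(p) = \pi_\gamma(x)$ (and symmetrically on the $y$ side), so by [C1] $d(u, [x, \pi_\gamma(x)]) \geq D$ and $d(r, [y, \pi_\gamma(y)]) \geq D$; (ii) the IVT choice of $s$ forces $d(q, [x, \pi_\gamma(x)]) \geq D/4$ for all $q \in [s, y]$, and symmetrically for $t$; (iii) the identities $\pi_{[s,y]}(r) = s$ and $\pi_{[x,t]}(u) = t$ restrict, since $[s,t]$ lies in both $[s,y]$ and $[x,t]$, to $\pi_{[s,t]}(r) = s$ and $\pi_{[s,t]}(u) = t$, so by [C1] $d(p, q) \geq d(s, t) \geq D/2$ for $p \in [r, s]$ and $q \in [t, u]$.

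These facts yield $d(p,q) \geq D/4$ in cases $(\sigma_1, \sigma_3)$ and $(\sigma_3, \sigma_5)$, $\geq D/2$ in $(\sigma_2, \sigma_4)$, $\geq 3D/4$ in $(\sigma_1, \sigma_4)$ and $(\sigma_2, \sigma_5)$ (using (i) together with a triangle subtraction of $D/4$), and $\geq D$ in $(\sigma_1, \sigma_5)$. In every case, dividing $d_\phi(p, q) \leq |\phi|$ by the floor yields a ratio at most $1 + 4(a + b + c)$, using $a + b + c \geq 1$ from the triangle inequality on $\pi_\gamma(x), x, y, \pi_\gamma(y)$.

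For the Morse conclusion, $\phi$ is a $(1 + 4(a + b + c), 0)$-quasi-geodesic with endpoints on $\gamma$, so $\phi \subset N_M(\gamma)$ with $M = M(1 + 4(a + b + c), 0)$. In particular $d(r, \gamma) \leq M$, and by (i) $\pi_\gamma(r) = \pi_\gamma(x)$, so $d(r, \pi_\gamma(x)) \leq M$ and $d(x, r) \geq aD - M$; a triangle inequality then gives $d(x, s) \geq aD - M - D/4$, and symmetrically $d(y, t) \geq cD - M - D/4$. Substituting into $d(x, s) + d(s, t) + d(t, y) = bD$ and using $d(s, t) \geq D/2$ yields $(a + c - b) D \leq 2M$, which rearranges to the stated bound. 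The main obstacle is the non-adjacent case analysis: combining the three projection facts consistently across the six pair types to extract floors as sharp as $3D/4$ or $D$, which is what keeps the final constant down to $1 + 4(a + b + c)$ rather than something proportional to $|\phi|/D$.
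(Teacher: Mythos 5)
Your proposal follows the same architecture as the paper's proof: break into the ten pairs of subsegments, dispose of adjacent pairs via Lemma~\ref{lem:quasigeodesic} (noting that each junction is a nearest-point projection as in that lemma's hypothesis), and then establish the lower bounds $D/4$, $D/2$, $3D/4$, $D$ for the six non-adjacent pairs using [C1] and the IVT choices of $s$ and $t$. Those floors are exactly the ones the paper extracts, and your CAT(0) projection facts (i)--(iii) are sound; in particular, the observation that $\pi_{[s,y]}(r)=s$ descends to $\pi_{[s,t]}(r)=s$ and hence is constant on $[r,s]$ is the right justification for the $(\sigma_2,\sigma_4)$ floor. Your derivation of the Morse bound on $D$ (substituting into $d(x,s)+d(s,t)+d(t,y)=bD$) is a valid alternative to the paper's route via the estimate $d([x,y],\gamma)\geq D\tfrac{a+c-b}{2}$, and reaches the same inequality $(a+c-b)D\leq 2M$.

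However, there is a concrete arithmetic gap in the last step of the quasi-geodesic verification. You bound $d_\phi(p,q)$ uniformly by $|\phi|\leq(\tfrac12+a+b+c)D$ and divide by the floor. For the two cases with floor $D/4$, namely $(\sigma_1,\sigma_3)$ and $(\sigma_3,\sigma_5)$, this gives a ratio of $\frac{(\tfrac12+a+b+c)D}{D/4}=2+4(a+b+c)$, which strictly exceeds the claimed constant $1+4(a+b+c)$; the inequality $a+b+c\geq1$ (equivalently $b\geq1$, by [C1]) does not close that gap since the excess is the additive constant $+1$, not a multiplicative one. The fix, and what the paper actually does, is to bound $d_\phi(p,q)$ by the sum of only the segments between $p$ and $q$: for $(\sigma_1,\sigma_3)$ one gets $d_\phi(p,q)\leq aD+\tfrac{D}{4}+bD$, whence the ratio is at most $1+4a+4b\leq 1+4(a+b+c)$, and symmetrically for $(\sigma_3,\sigma_5)$. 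With that single correction, every case lands under $1+4(a+b+c)$ and the rest of your argument goes through.
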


\begin{proof} We will show that $\forall w,z \in \phi,$ that the $(\left(1+4(a+b+c)\right), 0)$--quasi-isometric inequality of Equation \ref{eq:qi} is satisfied.  Since $\phi$ is a concatenation of geodesics, without loss of generality we can assume $a,b$ belong to different geodesic segments within $\phi.$   Since there are 5 different geodesic segments in $\phi,$ there are ${5 \choose 2} =10,$ cases to consider.  By Lemma \ref{lem:quasigeodesic} we know that the (3,0)--quasi-isometric inequality is satisfied in the case where $w$ and $z$ belong to adjacent geodesic segments in the concatenation.  Since $b\geq 1$ it follows that $1+4(a+b+c) >3,$ and in particular, the  $(\left(1+4(a+b+c)\right), 0)$-quasi-isometric inequality is satisfied.  To complete the proof of the first statement of the lemma we will consider the six remaining cases and in each case verify the quasi-isometric inequality:

\begin{enumerate}%[i]
\item $w\in [\pi_{\gamma}(x),r], z \in [s,t]:$ By definition, in this case $\frac{D}{4}=d(r,s) \leq d(w,z).$  Hence,  
\begin{eqnarray*}
d(w,z) \leq d_{\phi}(w,z) &=& d(w,r) + |[r,s]| + d(s,z) \leq  |[\pi_{\gamma}(x),x]| + |[r,s]| + |[s,t]| \\
&\leq& aD + \frac{D}{4} + bD = \frac{D}{4}(4a+4b+1)\\
 &\leq&  d(w,z)(1+ 4a+4b)
\end{eqnarray*} 

\item $w\in [r,s], z \in [t,u]:$ By definition, in this case $\frac{D}{2} \leq d(s,t) \leq d(w,z).$  Hence,  
\begin{eqnarray*}
d(w,z) \leq d_{\phi}(w,z) &=& d(w,s) + |[s,t]| + d(t,z) \leq  |[r,s]| + |[s,t]| + |[t,u]| \\
&\leq& \frac{D}{4} + bD + \frac{D}{4} = \frac{D}{2}(1+2b)\\
 &\leq&  d(w,z)(1+2b)
\end{eqnarray*} 

\item $w\in [s,t], z \in [u,\pi_{\gamma}(y)]:$ By definition, in this case $\frac{D}{4}=d(t,u) \leq d(w,z).$  Hence,  
\begin{eqnarray*}
d(w,z) \leq d_{\phi}(w,z) &=& d(w,t) + |[t,u]| + d(u,z) \leq  |[s,t]| + |[t,u]| +  |[\pi_{\gamma}(y),y]| \\
&\leq& bd + \frac{D}{4} +cD = \frac{D}{4}(4b+1+4c)\\
 &\leq&  d(w,z)(1+4b+4c)
\end{eqnarray*} 

\item $w\in [\pi_{\gamma}(x),r], z \in [t,u]:$  By property [C1] of Lemma \ref{lem:cat} in this case,
\begin{eqnarray*}
d(w,z) &\geq& d(\pi_{\gamma}(w),\pi_{\gamma}(z))  = d(\pi_{\gamma}(x),\pi_{\gamma}(z)) \\
&\geq& d(\pi_{\gamma}(x),\pi_{\gamma}(y)) - d(\pi_{\gamma}(y),\pi_{\gamma}(z))  =D- d(\pi_{\gamma}(u),\pi_{\gamma}(z)) \\    
&\geq&  D-|[u,z]| \geq D-|[u,t]| = \frac{3D}{4}.  \\   
\end{eqnarray*}
Then, the following inequality proves the desired quasi-isometric inequality in this case:
\begin{eqnarray*}
d(w,z) \leq d_{\phi}(w,z) &=& d(w,r) + |[r,s]| +|[s,t]|+ d(t,z) \\
&\leq&  |[\pi_{\gamma}(x),x]| + |[r,s]| + |[s,t]| + |[t,u]| \\
&\leq& aD + \frac{D}{4} + bD +\frac{D}{4}= \frac{D}{4}(2+4a+4b)\\
 &\leq&  d(w,z)\frac{(2+4a+4b)}{3}
\end{eqnarray*} 

\item $w\in [r,s], z \in [u,\pi_{\gamma}(y)]:$  As in the previous case, by property [C1] of Lemma \ref{lem:cat} in this case, we have that $d(w,z) \geq \frac{3D}{4}.$  Hence,
\begin{eqnarray*}
d(w,z) \leq d_{\phi}(w,z) &=& d(w,s) +|[s,t]|+ |[t,u]|+ d(u,z) \\
&\leq&  |[r,s]| + |[s,t]| + |[t,u]| + |[\pi_{\gamma}(y),y]|  \\
&\leq&  \frac{D}{4} + bD +\frac{D}{4} +cD= \frac{D}{4}(2+4b+4c)\\
 &\leq& d(w,z) \frac{(2+4b+4bc)}{3}
\end{eqnarray*} 

\item $w\in [\pi_{\gamma}(x),r], z \in [u,\pi_{\gamma}(y)]:$ By property [C1] of Lemma \ref{lem:cat} in this case $d(w,z) \geq D.$  Hence,
\begin{eqnarray*}
d(w,z) \leq d_{\phi}(a,b) &=& d(w,r) + |[r,s]| +|[s,t]|+ |[t,u]|+ d(u,z) \\ 
&\leq&   |[\pi_{\gamma}(x),x]|+ |[r,s]| + |[s,t]| + |[t,u]| + |[\pi_{\gamma}(y),y]|  \\
&\leq&  aD+ \frac{D}{4} + bD +\frac{D}{4} +cD= D(\frac{1}{2}+a+b+c)\\
 &\leq&  d(w,z)(\frac{1}{2}+a+b+c)
 \end{eqnarray*} 
\end{enumerate}

For the ``in particular'' clause of the lemma note that if $a+c>b$ then $d([x,y],\gamma) \geq D\frac{a+c-b}{2}.$  Since $[s,t] \subset [x,y],$ in particular $d([s,t],\gamma) \geq D\frac{a+c-b}{2}.$  On the other hand, $[s,t]$ is a non-trivial portion of the quasi-geodesic $\phi$ and hence must stay within a neighborhood of $\gamma$ controlled by the Morse constant of $\gamma.$  Specifically, $[s,t] \subset N_{M(1+4(a+b+c),0)}(\gamma).$  Combining the inequalities completes the proof of the lemma.  
\end{proof}

The following lemma and ensuing corollary will be be useful in the proof of Theorem \ref{thm:main}.  Specifically, these results will be used to reduce arguments regarding quasi-geodesics to case of geodesics.  The lemma is closely related to and should be compared with Lemma 3.8 of \cite{bestvina}.
\begin{lem} \label{lem:reduce}
Let $\gamma$ be an M--Morse, C--strongly contracting geodesic, and let $\gamma'$ be a (K,L)--quasi-geodesic with endpoints on $\gamma.$  Then $\gamma'$ is $C'(C,M)$-strongly contracting.  Similarly, let $\gamma$ be an M--Morse $(b,c)$--contracting geodesic, and let $\gamma'$ be a (K,L)--quasi-geodesic with endpoints on $\gamma.$  Then $\gamma'$ is $(b,c'(c,M)$--contracting.  
\end{lem}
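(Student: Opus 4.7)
The plan is to exploit the Morse hypothesis on $\gamma$ to confine $\gamma'$ to the $M$-neighborhood of $\gamma,$ and then transfer the (strongly) contracting property from $\gamma$ to $\gamma'$ with explicit additive loss controlled by $M$ and the $\gamma$-contracting constants.  Applying the $M$-Morse property of $\gamma$ to $\gamma',$ a $(K,L)$-quasi-geodesic with endpoints on $\gamma,$ one immediately obtains $\gamma' \subset N_{M(K,L)}(\gamma);$ writing $M := M(K,L),$ for every $z \in \gamma'$ we have $d(z, \pi_{\gamma}(z)) \leq M.$

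To verify that $\gamma'$ is $C'$-strongly contracting, take $x, y \in X$ satisfying $d(x, y) < d(x, \pi_{\gamma'}(x)),$ and set $p := \pi_{\gamma'}(x),$ $q := \pi_{\gamma'}(y),$ $p^{*} := \pi_{\gamma}(p),$ $q^{*} := \pi_{\gamma}(q);$ by the above, $d(p, p^{*}), d(q, q^{*}) \leq M.$  The heart of the proof is to show that $p^{*}$ lies at bounded distance from $\pi_{\gamma}(x),$ and symmetrically for $q^{*}$ and $\pi_{\gamma}(y).$  One approach is: when $d(x, p) \leq d(x, \pi_{\gamma}(x)),$ apply the $C$-strongly contracting property of $\gamma$ to the pair $x, p$ to obtain $d(p^{*}, \pi_{\gamma}(x)) \leq C;$ in the opposite regime $d(x, p) > d(x, \pi_{\gamma}(x)),$ combine the bound $d(x, \pi_{\gamma}(x)) \leq d(x, p^{*}) \leq d(x, p) + M$ with slimness of $\gamma$ (Lemma \ref{lem:bestvina}) to control the excess.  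An analogous argument handles the $y, q$ side, with the hypothesis $d(x, y) < d(x, p)$ being used to control $d(y, q)$ and hence $d(y, \pi_{\gamma}(y)).$  Once $d(p^{*}, \pi_{\gamma}(x))$ and $d(q^{*}, \pi_{\gamma}(y))$ are bounded in terms of $C, M,$ the triangle inequality
\[
d(p, q) \leq d(p, p^{*}) + d(p^{*}, \pi_{\gamma}(x)) + d(\pi_{\gamma}(x), \pi_{\gamma}(y)) + d(\pi_{\gamma}(y), q^{*}) + d(q^{*}, q),
\]
combined with the $C$-strongly contracting bound on $d(\pi_{\gamma}(x), \pi_{\gamma}(y))$ (invoking Lemma \ref{lem:close} to relax the hypothesis $b = 1$ slightly so as to absorb the additive $M$-discrepancy between $d(x, p)$ and $d(x, \pi_{\gamma}(x))$), yields the desired bound on $d(p, q).$

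The $(b, c)$-contracting variant follows by the identical template, replacing $C$-strongly contracting by $(b, c)$-contracting throughout; because the transfer from $\gamma'$- to $\gamma$-projections introduces only additive loss, the multiplicative constant $b$ on the hypothesis side is preserved, while $c$ absorbs additive $M$-contributions.  I expect the main technical obstacle to lie in the key step above: controlling $d(p^{*}, \pi_{\gamma}(x))$ in degenerate configurations where $\pi_{\gamma}(x)$ lies on a portion of $\gamma$ beyond the endpoints of $\gamma'.$  In such cases $p = \pi_{\gamma'}(x)$ must be close to an endpoint of $\gamma'$ (which itself lies on $\gamma$), so that $p^{*}$ is wedged between $\pi_{\gamma}(x)$ and this endpoint on $\gamma;$ careful invocation of slimness together with the endpoint constraint is needed to extract a uniform bound.
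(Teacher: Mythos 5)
Your approach is essentially the same as the paper's: both exploit $\gamma' \subset N_{M}(\gamma)$ from the Morse hypothesis, both use slimness (via Lemma \ref{lem:bestvina}) as the key engine to bound the discrepancy between $\gamma$- and $\gamma'$-projections, and both finish by a triangle inequality whose middle term is controlled by the contraction of $\gamma.$ The one cosmetic difference is the route of the final triangle inequality: you pass through $\pi_{\gamma}(\pi_{\gamma'}(x))$ and $\pi_{\gamma}(\pi_{\gamma'}(y)),$ while the paper passes through $\pi_{\gamma'}(\pi_{\gamma}(x))$ and $\pi_{\gamma'}(\pi_{\gamma}(y));$ both work, and the key slimness estimate $d(\pi_{\gamma}(\pi_{\gamma'}(z)),\pi_{\gamma}(z)) \leq 2M + 2(3C+1)$ is the same in either case.

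One step, however, is defective as stated. To bound the middle term $d(\pi_{\gamma}(x),\pi_{\gamma}(y))$ you have only the hypothesis $d(x,y) < d(x,\pi_{\gamma'}(x)),$ which after the $M$-neighborhood estimate gives $d(x,y) < d(x,\pi_{\gamma}(x)) + M;$ this is an \emph{additive} overshoot past the threshold $d(x,\pi_{\gamma}(x)).$ You propose invoking Lemma \ref{lem:close} to ``relax $b=1$ slightly'' to absorb it, but that lemma moves $b$ \emph{up toward} $1$ (from $b<1$), never past it -- so for a $C$-strongly contracting $\gamma,$ already at $b=1,$ it gives you nothing, and in any case an additive error cannot be absorbed by loosening a multiplicative factor when $d(x,\pi_{\gamma}(x))$ is small. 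The correct repair (and what the paper does implicitly in Equation (3.7)) is an interpolation argument: choose $y''$ on the geodesic $[x,y]$ with $d(x,y'') < d(x,\pi_{\gamma}(x))$ and $d(y'',y) \leq M,$ apply the contraction of $\gamma$ to the pair $x,y''$ to get $d(\pi_{\gamma}(x),\pi_{\gamma}(y'')) \leq C,$ and then use that projection onto the geodesic $\gamma$ is $1$-Lipschitz to get $d(\pi_{\gamma}(y''),\pi_{\gamma}(y)) \leq M,$ yielding $d(\pi_{\gamma}(x),\pi_{\gamma}(y)) \leq C+M.$ The same interpolation also handles the additive $bM$-overshoot in the $(b,c)$-contracting version, and is cleaner than any appeal to Lemma \ref{lem:close}. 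Finally, your flagged worry about $\pi_{\gamma}(x)$ lying past the endpoints of $\gamma'$ is real and affects both arguments; it is resolved by the standard fellow-traveling observation that, since $\pi_{\gamma}\circ\gamma'$ is a continuous path in $\gamma$ connecting the two endpoints, the subsegment of $\gamma$ between them lies in $N_{M}(\gamma'),$ together with noting that when $\pi_{\gamma}(x)$ lies beyond an endpoint, one may replace it by that endpoint in the estimate at an additive cost already absorbed by the bounds.
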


We will prove the first statement of the lemma.  The proof of the ``similarly'' statement is identical.

Since nearest point projections onto quasi-geodesics is not uniquely determined, in the proof of the lemma we will use the convention that $\pi_{\gamma'}(x)$ represents an arbitrary element in the nearest point projection set of $x$ onto $\gamma'.$  Additionally, when measuring distances between elements in nearest point projection sets, such as $d(\pi_{\gamma'}(x),\pi_{\gamma'}(y)),$ we will use the convention that the distance is the supremum over all possible choices of elements in the nearest point projection sets.  That is, $$d(\pi_{\gamma'}(x),\pi_{\gamma'}(y)) =: \sup\{d(x',y') | x' \in \pi_{\gamma'}(x), y' \in \pi_{\gamma'}(y)\}.$$

\begin{proof}  First we will prove that $\forall z \in X, d(\pi_{\gamma'}(z),\pi_{\gamma'}(\pi_{\gamma}(z)))$ is bounded above in terms of the constants $C,M.$   Set the Morse constant $M(K,L)=M.$  Then, 

\begin{equation}
\label{eq01}
  \begin{aligned}
   d(z,\pi_{\gamma}(\pi_{\gamma'}(z)))  &\leq& d(z,\pi_{\gamma'}(z)) + d(\pi_{\gamma'}(z),\pi_{\gamma}(\pi_{\gamma'}(z))) \leq d(z,\pi_{\gamma'}(z)) +M \\
&\leq& d(z,\pi_{\gamma}(z))  + d(\pi_{\gamma}(z),\gamma')+M \leq  d(z,\pi_{\gamma}(z))  +2M.
  \end{aligned}
\end{equation}

By Lemma \ref{lem:bestvina}, the geodesic $\gamma$ is (3C+1)-slim.  Consider the triangle $\triangle(z,\pi_{\gamma}(z),\pi_{\gamma}(\pi_{\gamma'}(z))).$   
(3C+1)--slimness in conjunction with Equation \ref{eq01} implies that 
\begin{eqnarray} \label{eq:r1}d(\pi_{\gamma}(z),\pi_{\gamma}(\pi_{\gamma'}(z))) \leq 2M +2(3C+1).\end{eqnarray}
Since $\gamma' \subset N_{M}(\gamma),$ by the triangle inequality 
\begin{eqnarray} \label{eq:r2}d(\pi_{\gamma}(z),\pi_{\gamma}(\pi_{\gamma'}(\pi_{\gamma}(z)))) \leq 2M.
\end{eqnarray}
Combining Equations \ref{eq:r1} and \ref{eq:r2}, by the triangle inequality we have 
\begin{eqnarray} \label{eq:r3}d(\pi_{\gamma}(\pi_{\gamma'}(\pi_{\gamma}(z))),\pi_{\gamma}(\pi_{\gamma'}(z))) \leq 4M +2(3C+1).
\end{eqnarray}
Finally, using the Equation \ref{eq:r3} in conjunction with the fact that $\gamma' \subset N_{M}(\gamma)$ and the triangle inequality, it follows that $\forall z\in X,$
\begin{eqnarray}\label{eq:r4}
d(\pi_{\gamma'}(\pi_{\gamma}(z)),\pi_{\gamma'}(z)) \leq 6M+2(3C+1).
\end{eqnarray}

%Since our choices $x',x''$ were arbitrary, we have shown that $d(\pi_{\gamma'}(x),\pi_{\gamma'}(\pi_{\gamma}(x)))\leq 6M+2(3C+1).$ Let $x'' \in \pi_{\gamma'}(\pi_{\gamma}(x)).$  

Now assume we have $x,y \in X$ such that $d(x,y) < d(x, \pi_{\gamma'}(x)).$  We must show that we can bound $d(\pi_{\gamma'}(x),\pi_{\gamma'}(y))$ from above in terms of the constants $C,M.$  Since $d(x,y) < d(x, \pi_{\gamma'}(x)) \leq d(x, \pi_{\gamma}(x)) +M,$ using the facts that $\gamma$ is $C$--strongly contracting and nearest point projections onto geodesics are distance non-increasing, we have that
\begin{eqnarray} \label{eq:r4.5} d(\pi_{\gamma}(x), \pi_{\gamma}(y)) \leq C +M.
\end{eqnarray}
As above, using the fact that $\gamma' \subset N_{M}(\gamma),$ in conjunction with Equation \ref{eq:r4.5} and the triangle inequality, it follows that \begin{eqnarray}\label{eq:r5}
d(\pi_{\gamma'}(\pi_{\gamma}(x)),\pi_{\gamma'}(\pi_{\gamma}(y))) \leq C+ 3M.
\end{eqnarray}  Putting together Equations \ref{eq:r4} and \ref{eq:r5}, the following completes the proof:
\begin{eqnarray*}
d(\pi_{\gamma'}(x),\pi_{\gamma'}(y)) &\leq& d(\pi_{\gamma'}(x),\pi_{\gamma'}(\pi_{\gamma}(x))) + d(\pi_{\gamma'}(\pi_{\gamma}(x)),\pi_{\gamma'}(\pi_{\gamma}(y))) + d(\pi_{\gamma'}(\pi_{\gamma}(y)),\pi_{\gamma'}(y)) \\
&\leq& (6M+2(3C+1)) + (C+ 3M) +(6M+2(3C+1)) = 15M +7C +4
\end{eqnarray*}  
  % It follows that $d(\pi_{\gamma}(x),\pi_{\gamma}(x')) \leq 2M +2(3C+1).$  
%Since our choices $x',x''$ were arbitrary, we have  
%Finally, since $\gamma'$ is a $(K,L)$--quasi-geodesic it follows that $d(x'',x') \leq K(6M+2(3C+1)) +L.$  
\end{proof}

As a corollary of Lemma \ref{lem:reduce} we have the following:
\begin{cor} \label{cor:reduce}
If it's true that a geodesic being M-Morse implies that the geodesic is $C(M)$--strongly contracting, then it's also true that a quasi-geodesic being $M'$-Morse implies that the quasi-geodesic is $C'(M')$--strongly contracting.  Similarly, if it's true that a geodesic being M-Morse implies that the geodesic is $(b,c)$--contracting, then it's also true that a quasi-geodesic being $M'$-Morse implies that the quasi-geodesic is $(b,c'(M'))$--contracting. 
\end{cor}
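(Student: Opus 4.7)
My plan is to apply Lemma \ref{lem:reduce} by constructing a geodesic $\gamma$ on which $\gamma'$ has endpoints. I describe the strongly-contracting case; the $(b,c)$-contracting case is identical using the ``similarly'' part of Lemma \ref{lem:reduce}. Let $\gamma'$ be an $M'$-Morse $(K,L)$-quasi-geodesic; I first assume $\gamma'$ has endpoints $p, q$ (the bi-infinite case reduces to this by finite truncation). Set $\gamma = [p,q]$, so that $\gamma'$ has endpoints on $\gamma$ by construction.

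The key claim is that $\gamma$ itself is $M$-Morse for some $M = M(M', K, L)$. Given this, the hypothesis of the corollary yields that $\gamma$ is a $C(M)$-strongly contracting geodesic, and Lemma \ref{lem:reduce} applied to the pair $(\gamma, \gamma')$ gives that $\gamma'$ is $C'(C(M), M) = C'(M')$-strongly contracting, completing the proof.

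To verify $\gamma$ is Morse, let $\sigma$ be any $(K_1, L_1)$-quasi-geodesic with endpoints $a, b \in \gamma$. Applying Morse of $\gamma'$ to the $(1,0)$-quasi-geodesic $\gamma$ (which has endpoints on $\gamma'$) gives $\gamma \subset N_{M_0}(\gamma')$ with $M_0 = M'(1,0)$, so one can choose $a', b' \in \gamma'$ with $d(a, a'), d(b, b') \leq M_0$. The concatenation $\tilde\sigma = [a',a] \cup \sigma \cup [b,b']$ is a quasi-geodesic from $a'$ to $b'$ with constants controlled by $(K_1, L_1, M_0)$, so Morse of $\gamma'$ places $\tilde\sigma$, and in particular $\sigma$, in a controlled neighborhood of $\gamma'$.

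The main obstacle is transferring closeness to $\gamma'$ into closeness to $\gamma$: one needs a reverse Hausdorff inclusion $\gamma' \subset N_D(\gamma)$ with $D$ depending only on $M', K, L$. While $\gamma \subset N_{M_0}(\gamma')$ is immediate from Morse, the reverse is the subtle point. I would prove it by contradiction: if some $z \in \gamma'$ has $d(z, \gamma) = R$ very large, combine the CAT(0) convexity of $d(\cdot, \gamma)$ along geodesics $[z, w]$ (for an appropriate $w \in \gamma'$ close to $\pi_\gamma(z)$, obtained via $\gamma \subset N_{M_0}(\gamma')$) with iterated applications of Morse of $\gamma'$ to the $(1,0)$-quasi-geodesics $[z, w]$, bounding $R$ uniformly in $M', K, L$. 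This supplies the missing ingredient, verifies that $\gamma$ is Morse, and thereby establishes the corollary.
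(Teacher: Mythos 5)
Your strategy — build a geodesic on which $\gamma'$ has endpoints, show that geodesic is Morse, apply the hypothesis to get it strongly contracting, then invoke Lemma~\ref{lem:reduce} — is the same route the paper takes. The proof in the paper is essentially this argument made local: for each $x$ it cuts out a geodesic segment $[\alpha_x,\beta_x]$ with endpoints on $\gamma'$, chosen so far from $x'=\pi_{\gamma'}(x)$ in each direction (distance $\geq 4d(x,x')$) that nearest-point projections of the relevant $x,y$ onto $\gamma'$ land inside $\gamma'|_{[\alpha_x,\beta_x]}$, then applies Lemma~\ref{lem:reduce} to that segment and its geodesic.

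The gap in your write-up is exactly the part this local construction handles. You say ``the bi-infinite case reduces to this by finite truncation,'' but that reduction is where the real work lies, not a remark. If $\gamma'_n$ denotes a truncation, you obtain a \emph{uniform} strong-contraction constant for $\gamma'_n$, but the quantity you must bound is $d(\pi_{\gamma'}(x),\pi_{\gamma'}(y))$, not $d(\pi_{\gamma'_n}(x),\pi_{\gamma'_n}(y))$; these coincide only when the truncation is taken far enough past $\pi_{\gamma'}(x)$ and $\pi_{\gamma'}(y)$ in both directions. You must therefore argue that for each $x$ (and every $y$ within the contracting radius) a suitably large truncation captures both projections — this is precisely the role played by $\alpha_x,\beta_x$ in the paper, together with the observation $d(x',y')\leq 4d(x,x')$. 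Without spelling this out, the reduction is circular-feeling.

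Your second flagged issue — that showing $\gamma=[p,q]$ is Morse requires the reverse Hausdorff inclusion $\gamma'\subset N_D(\gamma)$ — is real, and your sketch via CAT(0) convexity of $d(\cdot,\gamma)$ is a workable plan. Note, though, that the paper glosses this too: it simply asserts $[\alpha_x,\beta_x]$ is $M''(K,L)$-Morse with $M''(K,L)=M'(0,1)+M'(K,L)$ (the $M'(0,1)$ is presumably a typo for $M'(1,0)$). So on that point you are, if anything, more honest about what still needs an argument. To make your proposal complete you should (a) actually carry out the reverse-inclusion bound, and (b) replace the phrase ``finite truncation'' with the quantitative localization described above. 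With those additions the proof matches the paper's in both strategy and substance.
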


\begin{proof}  Once again we will prove the first statement, and the ``similarly'' statement follows identically.  Assume that if a geodesic is M--Morse then it is also $C(M)$--strongly contracting.  Let $\gamma'$ be an $M'$--Morse quasi-geodesic.  Fix $x\in X$ and $x' \in \pi_{\gamma'}(x).$  Let $y\in X$ be such that $d(x,y) < d(x,x'),$ and fix $y' \in \pi_{\gamma'}(y).$   Notice that $$d(x',y') \leq d(x',x) + d(x,y) +d(y,y') \leq  d(x',x)  + d(x',x) + 2d(x',x) = 4d(x',x).$$  

Let $\alpha_{x} \in \gamma'$ be any point preceding $x'$ such that $d(\alpha_{x},x') \geq 4d(x',x).$  Similarly, let $\beta_{x} \in \gamma'$ be any point following $x'$ such that $d(x',\beta_{x}) \geq 4d(x',x)$ [if these choices are not possible because $\gamma'$ terminates, then set $\alpha_{x}(\beta_{x})$ to be equal to the terminal point of $\gamma'$ which precedes (follows) $x'$].  

Since $\gamma'$ is an $M'$--Morse quasi-geodesic and because $[\alpha_{x},\beta_{x}]$ is a geodesic with endpoints on $\gamma',$ it follows that $[\alpha_{x},\beta_{x}]$ is similarly $M''(K,L)$--Morse, where the constant $M''(K,L)=M'(0,1)+M'(K,L).$  In particular, the constant $M''$ only depends on $M'.$  Then, by assumption the geodesic $[\alpha_{x},\beta_{x}]$ is $C(M'')=C(M')$--strongly contracting.  By Lemma \ref{lem:reduce} it follows that $\gamma'|_{[\alpha_{x},\beta_{x}]}$ is $C'(C(M'),M')=C'(M')$--strongly contracting.  In particular, since for all $y\in X$ such that $d(x,y) \leq d(x,x'),$ we know that $\pi_{\gamma'}(y) \subset \gamma'|_{[\alpha_{x},\beta_{x}]},$  it follows that $d(\pi_{\gamma'}(x),\pi_{\gamma'}(y))\leq C'(M').$  Since for any starting $x \in X$ we can preform this process of creating such an interval $[\alpha_{x},\beta_{x}]$ and proceeding as above, it follows that the quasi-geodesic $\gamma'$ is $C'(M')$--strongly contracting.
\end{proof}

We are now prepared to prove the main theorem.
\begin{thm} \label{thm:main}
Let $X$ be a CAT(0) space and $\gamma \subset X$ a (K,L)--quasi-geodesic.  Then the following are equivalent:
\begin{enumerate}
\item $\gamma$ is $C'$--strongly contracting, 
\item $\gamma$ is (b,c)--contracting,
\item $\gamma$ is $M$--Morse, and
\item $\gamma$ is $S$--slim.
\end{enumerate}
Moreover, any one of the four sets of constants $\{(b,c),C',M,S\}$ can be written purely in terms of the any of the others in conjunction with the quasi-isometry constant (K,L).  
\end{thm}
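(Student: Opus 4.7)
The plan is to close the equivalence via a short cycle of implications, with every constant tracked explicitly. The implication $(1) \Rightarrow (2)$ is immediate by taking $b = 1$ and $c = C'$. For the remaining work I would prove $(2) \Rightarrow (3) \Rightarrow (1)$ in a loop, paired with $(1) \Leftrightarrow (4)$.

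For $(2) \Rightarrow (3)$, I would adapt the standard chain argument of \cite{drutumozessapir, sultanmorse} and make the constants explicit: given a $(K_0, L_0)$-quasi-geodesic $\sigma$ with endpoints on $\gamma$, walk along $\sigma$ producing a sequence $x_0, \ldots, x_n$ with $d(x_i, x_{i+1}) < b \cdot d(x_i, \pi_\gamma(x_i))$, so that the $(b, c)$-contracting property forces $d(\pi_\gamma(x_i), \pi_\gamma(x_{i+1})) < c$. Summing displacements and comparing to $|\sigma|$ yields the Morse bound $M = M(K_0, L_0, b, c, K, L)$.

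For $(3) \Rightarrow (1)$, the heart of the paper, I would use Corollary \ref{cor:reduce} to reduce to the geodesic case and then exploit Lemma \ref{lem:genquasi}. Let $\gamma$ be an $M$-Morse geodesic and $x, y \in X$ with $d(x, y) \leq d(x, \pi_\gamma(x))$, and set $D = d(\pi_\gamma(x), \pi_\gamma(y))$. In the normalization $a = d(x, \pi_\gamma(x))/D$, $b = d(x, y)/D$, $c = d(y, \pi_\gamma(y))/D$ of Lemma \ref{lem:genquasi}, property (C1) forces $b \geq 1$ and the hypothesis forces $b \leq a$, so $a + c - b \geq c \geq 0$. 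When $c = 0$ the point $y$ lies on $\gamma$, so $d(x, y) \geq d(x, \gamma) = d(x, \pi_\gamma(x))$; combined with the reverse inequality and uniqueness of the projection onto a geodesic in CAT(0), this forces $y = \pi_\gamma(x)$ and $D = 0$. When $c > 0$, Lemma \ref{lem:genquasi} gives $D(a + c - b) \leq 2M(1 + 4(a + b + c), 0)$, a self-referential inequality in which the Morse function is evaluated at an argument depending on $D$. Closing this into an absolute bound on $D$ in terms of $M$ is the technical crux and is done by a case analysis on the relative size of $c$ (equivalently, on the ratio $(d(x, \gamma) + d(x, y) + d(y, \gamma))/D$), invoking the $(3, 0)$-quasi-geodesic of Lemma \ref{lem:quasigeodesic} in the degenerate regimes to independently cap $d(x, \gamma)$ and hence $D$. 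Corollary \ref{cor:reduce} then transfers the conclusion to quasi-geodesics with an explicit loss in constants.

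For $(1) \Rightarrow (4)$, Lemma \ref{lem:bestvina} handles geodesics directly; for quasi-geodesics I would pass through a nearby geodesic with endpoints on the quasi-geodesic (using Lemma \ref{lem:reduce} to preserve the strong-contracting constant) and translate slimness back to $\gamma$, absorbing the extra terms into the slim constant. For $(4) \Rightarrow (3)$, I would argue directly: given a $(K_0, L_0)$-quasi-geodesic $\sigma$ with endpoints $u, v$ on $\gamma$ and a deepest point $p \in \sigma$, applying the slim inequality to $[p, u]$ and $[p, v]$ produces short bridges from $\pi_\gamma(p)$ to both arcs, which together with the quasi-geodesic bound on $|\sigma|$ caps $d(p, \gamma)$ by an explicit function of $S, K_0, L_0, K, L$.

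I expect the main obstacle to be step $(3) \Rightarrow (1)$. Beyond the self-referential nature of the bound from Lemma \ref{lem:genquasi}, one must gracefully handle the boundary $c = 0$ and the near-boundary $c$ small, and then package all three regimes (degenerate, intermediate, and generic) into a single explicit function $C'(M)$. The extension from geodesic $\gamma$ to quasi-geodesic $\gamma$ via Corollary \ref{cor:reduce} is essentially routine once the geodesic case is established.
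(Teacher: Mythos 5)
Your overall skeleton (a cycle of explicit implications) is reasonable, and $(1)\Rightarrow(2)$, $(2)\Rightarrow(3)$ match the paper. But your $(3)\Rightarrow(1)$ as sketched would not close. You feed $x,y$ directly into Lemma~\ref{lem:genquasi} and observe that $b\le a$ forces $a+c-b\ge c\ge 0$, but you never address the real obstruction: the Morse constant in the resulting inequality is evaluated at $1+4(a+b+c)$, and $a=d(x,\gamma)/D$ is unbounded (the point $x$ may be arbitrarily far from $\gamma$ relative to $D$). Even with $a+c-b$ bounded below, $2M\bigl(1+4(a+b+c),0\bigr)$ blows up, so the "self-referential inequality" gives no bound on $D$. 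The paper's essential move, which you omit entirely, is to first slide $x$ and $y$ down the segments $[x,\pi_\gamma(x)]$ and $[y,\pi_\gamma(y)]$ to replacement points at parameter $s=D/A$ using the CAT(0) convexity property C2 of Lemma~\ref{lem:cat}; this pins $a=1$ exactly and $b,c$ in absolute intervals, after which Lemma~\ref{lem:genquasi} gives $D\le 4M(15,0)$. A further layer of reductions (via Lemma~\ref{lem:close} and CAT(0) comparison triangles) then upgrades $(\tfrac14,c)$-contraction to strong contraction. "Case analysis on the relative size of $c$" is not a substitute for this convexity rescaling.

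Your $(4)\Rightarrow(3)$ via a deepest point also fails. With $p\in\sigma$ deepest and $R=d(p,\gamma)$, slimness applied to $[p,u]$ and $[p,v]$ gives $d(u,p)+d(p,v)\ge 2R+d(u,v)-4S$, while the quasi-geodesic bound on $\sigma$ gives only $d(u,p)+d(p,v)\le K_0^2\,d(u,v)+\mathrm{const}$. Combining yields $2R\le (K_0^2-1)\,d(u,v)+\mathrm{const}$, which does not bound $R$ once $u,v$ are far apart. Slimness of $\gamma$ gives no leverage on interior subsegments of $\sigma$, since neither endpoint of such a subsegment lies on $\gamma$, so there is no way to localize the estimate. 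The paper sidesteps this by proving $(4)\Rightarrow(2)$ instead: locating a nearly-equidistant point $\zeta\in\gamma|_{[x',y']}$ via the intermediate value theorem, using slimness to transfer $x',y'$ close to the geodesics $[x,\zeta]$, $[y,\zeta]$, and then invoking CAT(0) thinness to obtain $(\tfrac{1}{2K},8S+6L)$-contraction directly; the Morse property then follows from the already-established $(2)\Rightarrow(3)$ rather than being re-derived from slimness.
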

\begin{proof}
By definition (1)$\implies$(2).  The fact that (2)$\implies$(3) is a slight generalization of the well known ``Morse stability lemma.''  For an explicit proof see Lemma 3.3 in \cite{sultanmorse} (or similarly Lemma 5.13 in \cite{algomkfir}).  In order to complete the proof of the theorem we will provide an explicit proof that: (3)$\implies$(2), (3) [+(2)] $\implies$(1), (1)+(3)$\implies$(4), and (4)$\implies$(2).   
%(1) $\implies$(4), the proof of (1)$\implies$(2) in \cite{bestvina} immediately carries through for $\gamma$ a quasi-geodesic.  

\textbf{(3)$\implies$(2)}:  By Corollary \ref{cor:reduce} it suffice to prove (3)$\implies$(2) in the special case of $\gamma$ a geodesic.  Fix $x,y \in X$ such that $d(x,y) \leq \frac{1}{4}d(x, \pi_{\gamma}(x)).$  Set $A=d(x, \pi_{\gamma}(x)),$ $D=d( \pi_{\gamma}(x), \pi_{\gamma}(y)).$  Note that  $\frac{3A}{4} \leq d(y, \pi_{\gamma}(y))\leq \frac{5A}{4}.$  
%Since $\gamma$ is M(K,L)--Morse, and because the geodesic $[x',y']$ has endpoints on $\gamma,$ in particular, it follows that $d(z',\gamma)\leq M(1,0)=M_{1}.$  Since we wish to bound $D$ without loss of generality we can assume that $M_{1}<\frac{D}{32}.$  

Let $\rho_{1}:[0,1]\ra X$ be the geodesic parameterized proportional to arc length joining $ \pi_{\gamma}(x)=\rho_{1}(0)$ and $x=\rho_{1}(1).$  Similarly, let $\rho_{2}:[0,1]\ra X$ be the geodesic parameterized proportional to arc length joining $ \pi_{\gamma}(y)=\rho_{2}(0)$ and $y=\rho_{2}(1).$  Note that by property [C1] of Lemma \ref{lem:cat} $\frac{A}{4} \geq D.$  Set $s=\frac{D}{A},$ so $s \in [0,1].$  Applying property [C2] of Lemma \ref{lem:cat} to the geodesics $\rho_{1},\rho_{2},$ we have that 
\begin{eqnarray*}
d(\rho_{1}(s),\rho_{2}(s)) &\leq&  (1-s) d( \pi_{\gamma}(x), \pi_{\gamma}(y)) + sd(x,y) \\
     &\leq&  (1-s)D + s\frac{A}{4} \leq D+ \frac{D}{A}\frac{A}{4} = \frac{5D}{4}   \\
\end{eqnarray*}

As in the discussion proceeding Lemma \ref{lem:genquasi} with $\rho_{1}(s)$ taking the place of $x$ and $\rho_{2}(s)$ taking the place of $y,$ we can construct a quasi-geodesic $\phi$ composed of the concatenation of five geodesic segments.  By construction, in this case our constants are $a=1,b \in [1,\frac{5}{4}],$ and $c \in [\frac{3}{4},\frac{5}{4}].$  By Lemma \ref{lem:genquasi} $\phi$ is a (15,0)--quasi-geodesic.  Furthermore, since $a+c-b \geq \frac{1}{2},$ again by Lemma \ref{lem:genquasi} it follows that $D \leq 4M(15,0).$  Hence, we have just shown that if $\gamma$ is M(K,L)--Morse then it is $(\frac{1}{4},4M(15,0))$--contracting.  This completes the proof of (3)$\implies$(2).  

\textbf{(3) [+(2)]$\implies$(1)}:  By Corollary \ref{cor:reduce} it suffice to prove (3)$\implies$(1) in the special case of $\gamma$ a geodesic.  %Moreover, by the proceeding step since (3)$\implies$(2), in this step it suffices to show that (3) +(2)  $\implies$(1). 

Fix $x,y \in X$ such that $d(x,y) < d(x,\pi_{\gamma}(x)).$  Set $A=d(x, \pi_{\gamma}(x)),$ $D=d( \pi_{\gamma}(x), \pi_{\gamma}(y)),$ and $B=d( \pi_{\gamma}(y),y).$   In order to complete the proof we must bound $D.$  

Without loss of generality we can assume that $B < \frac{A}{100}.$  If not, then by the previous step of (3)$\implies$(2), in conjunction with Lemma \ref{lem:close}, it follows that $\gamma$ is both $(\frac{99}{100},K_{99/100})$--contracting and $(\frac{9901}{10,000},K_{9901/10,000})$--contracting, where the constants $K_{99/100}, K_{9901/10,000}$ depend only on the Morse constant.  Let $z \in [x,y]$ such that $d(z,y) = \frac{99A}{10,000}$ [if this is not possible, namely $d(x,y) < \frac{99A}{10,000},$ then set $z=y$].  But then, $d(x,z)= d(x,y)-d(y,z)\leq A -  \frac{99A}{10,000} = A\frac{9901}{10,000}.$  It follows that $$D = d(\pi_{\gamma}(x),\pi_{\gamma}(y)) \leq d(\pi_{\gamma}(x),\pi_{\gamma}(z)) + d(\pi_{\gamma}(z),\pi_{\gamma}(y)) \leq K_{9901/10,000} + K_{99/100} .$$

Similarly, without loss of generality we can assume $A\geq 3D.$  If not, then by the previous step of (3)$\implies$(2), in conjunction with Lemma \ref{lem:close}, it follows that $\gamma$ is $(\frac{3}{4},K_{3/4})$--contracting, where the constant $K_{3/4}$ depends only on the Morse constant.  Let $z \in [x,y]$ such that $d(x,z) = \frac{3A}{4}$ [if this is not possible, namely $d(x,y) < \frac{3A}{4},$ then set $z=y$].  But then, 
\begin{eqnarray*} 
D &=& d(\pi_{\gamma}(x),\pi_{\gamma}(y)) \leq d(\pi_{\gamma}(x),\pi_{\gamma}(z)) + d(\pi_{\gamma}(z),\pi_{\gamma}(y)) \\
&\leq& K_{3/4} +d(z,y) \leq K_{3/4} +\frac{A}{4} \leq K_{3/4} +\frac{3D}{4} \\
&\implies& D \leq 4K_{3/4}.
\end{eqnarray*}

We complete the proof of this step by considering two cases:

\indent \textbf{Case 1: $B \geq 2D$}

Consider the triangle $\triangle=\triangle(x,y,\pi_{\gamma}(y)).$  Let the triangle $\overline{\triangle}= \overline{\triangle}(\overline{x},\overline{y},\overline{\pi_{\gamma}(y)})$ be the corresponding comparison triangle in Euclidean space.  Since $|[x, \pi_{\gamma}(y)]| \geq |[x, \pi_{\gamma}(x)]|=A,$ while $|[x,y]| < A,$ and $|[y, \pi_{\gamma}(y)]| < \frac{A}{100} <A,$ it follows that the angle in the comparison triangle $\overline{\triangle}$ between the sides $[\overline{x},\overline{\pi_{\gamma}(y)}]$ and $[\overline{y},\overline{ \pi_{\gamma}(y)}])$ is less than $90^{o}.$  Let $\overline{p} \in [\overline{x},\overline{\pi_{\gamma}(y)}]$ such that $d(\overline{p},\overline{\pi_{\gamma}(y)})=2D.$  Similarly, let $\overline{q} \in [\overline{y},\overline{\pi_{\gamma}(y)}]$ such that $d(\overline{q},\overline{\pi_{\gamma}(y)})=2D.$  Since the angle between the sides $[\overline{p},\overline{\pi_{\gamma}(y)}]$ and $[\overline{q},\overline{ \pi_{\gamma}(y)}])$ is less than $90^{o},$ it follows by elementary Euclidean geometry that $d(\overline{p},\overline{q}) \leq 2\sqrt{2}D.$  Hence by the CAT(0) comparison property  $d(p,q) \leq 2\sqrt{2}D.$   

\begin{figure}
  \centering
  \subfloat{\label{fig:case1}\includegraphics[width=0.3\textwidth]{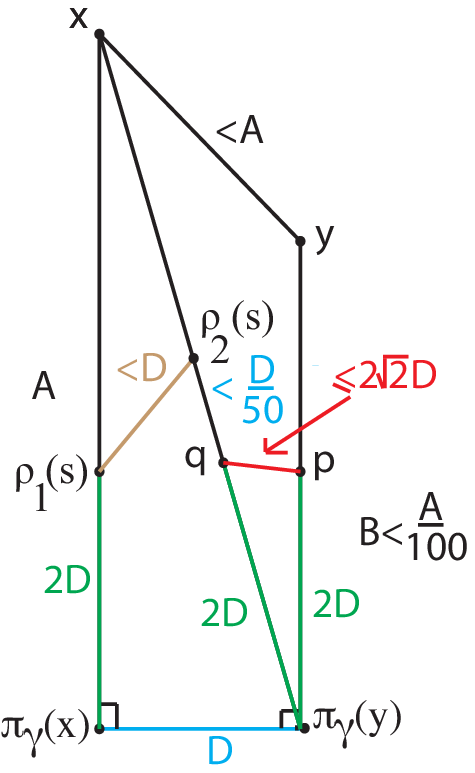}}
  ~ %add desired spacing between images, e. g. ~, \quad, \qquad etc. (or a blank line to force the subfig onto a new line)
  \subfloat{\label{fig:case2}\includegraphics[width=0.28\textwidth]{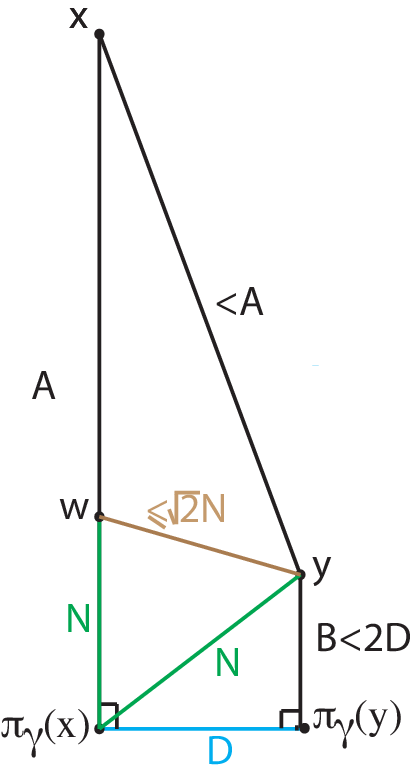}}
  ~ %add desired spacing between images, e. g. ~, \quad, \qquad etc. (or a blank line to force the subfig onto a new line)
  \caption{Illustration of Cases 1 and 2 in the proof of (3)$\implies$(1)}
  \label{fig:cases3implies1}
\end{figure}

Let $\rho_{1}:[0,1]\ra X$ be the geodesic parameterized proportional to arc length joining $ \pi_{\gamma}(x)=\rho_{1}(0)$ and $x=\rho_{1}(1).$  Similarly, let $\rho_{2}:[0,1]\ra X$ be the geodesic parameterized proportional to arc length joining $ \pi_{\gamma}(y)=\rho_{2}(0)$ and $x=\rho_{2}(1).$  Note that by our assumptions, $|\rho_{1}| \leq |\rho_{2}| \leq \frac{101}{100} |[\rho_{1}]|.$     

Set $s=\frac{2D}{A},$ so by our assumptions $s \in [0,1].$  Applying property [C2] of Lemma \ref{lem:cat} to the geodesics $\rho_{1},\rho_{2},$ we have that 
\begin{eqnarray*}
d(\rho_{1}(s),\rho_{2}(s)) &\leq&  (1-s) d( \pi_{\gamma}(x), \pi_{\gamma}(y)) + sd(x,x) \\
     &\leq&  (1-s)D + 0  = D   \\
\end{eqnarray*}

Recall that we let $p \in [x,\pi_{\gamma}(y)]$ be such that $d(p,\pi_{\gamma}(y))=2D.$  Similarly, we let $q \in [y,\pi_{\gamma}(y)]$ be such that $d(q,\pi_{\gamma}(y))=2D.$  By definition $\rho_{1}(s) \in [x,\pi_{\gamma}(x)]$ satisfies $d(\rho_{1}(s),\pi_{\gamma}(x))=2D.$  Furthermore, since $|\rho_{1}| \leq |\rho_{2}| \leq \frac{101}{100} |[\rho_{1}]|,$ it follows that $d(\rho_{2}(s),p) \leq \frac{D}{50}.$  Putting things together,
\begin{eqnarray*}
d(\rho_{1}(s),q) &\leq& d(\rho_{1}(s),\rho_{2}(s)) + d(\rho_{2}(s), p ) + d(p,q) \\
 &\leq& D + \frac{D}{50} + 2\sqrt{2}D \leq 3.9D \\
\end{eqnarray*}

As in the discussion proceeding Lemma \ref{lem:genquasi} with $\rho_{1}(s)$ taking the place of $x$ and $q$ taking the place of $y,$  we can construct a quasi-geodesic $\phi$ composed of the concatenation of five geodesic segments.  By construction, in this case our constants are $a,c=2,b \leq 3.9.$  By Lemma \ref{lem:genquasi} $\phi$ is a (33,0)--quasi-geodesic.  Furthermore, since $a+c-b \geq 0.1,$ again by Lemma \ref{lem:genquasi} it follows that $D \leq 200M(33,0).$  This completes the proof of (3)$\implies$(1) in this case. 

\indent \textbf{Case 2: $B \leq 2D$} 

Let $N= d(\pi_{\gamma}(x),y).$  In this case, by the triangle inequality $$N \leq d(\pi_{\gamma}(x),\pi_{\gamma}(y)) + d(\pi_{\gamma}(y),y) \leq 3D.$$   

Consider the triangle $\triangle=\triangle(x,y,\pi_{\gamma}(x)).$  Let the triangle $\overline{\triangle}= \overline{\triangle}(\overline{x},\overline{y},\overline{\pi_{\gamma}(x)})$ be the corresponding comparison triangle in Euclidean space.  Since $|[x, \pi_{\gamma}(x)]| =A,$ while $|[x,y]| < A,$ and $|[y, \pi_{\gamma}(x)]| \leq 3D \leq A,$ it follows that the angle in the comparison triangle $\overline{\triangle}$ between the sides $[\overline{x},\overline{\pi_{\gamma}(x)}]$ and $[\overline{y},\overline{ \pi_{\gamma}(x)}])$ is less than $90^{o}.$  Let $\overline{w} \in [\overline{x},\overline{\pi_{\gamma}(x)}]$ such that $d(\overline{w},\overline{\pi_{\gamma}(x)})=N.$  Since the angle between the sides $[\overline{w},\overline{\pi_{\gamma}(x)}]$ and $[\overline{y},\overline{ \pi_{\gamma}(x)}])$ is less than $90^{o},$ it follows by elementary Euclidean geometry that $d(\overline{w},\overline{y}) \leq \sqrt{2}N.$  Hence by the CAT(0) comparison property  $d(w,y) \leq \sqrt{2}N.$

To complete the proof of (3)$\implies$(1), we will consider two subcases:

\textbf{Case 2a: $B\geq \frac{N}{2}$} 

As in the discussion proceeding Lemma \ref{lem:genquasi} with $w$ taking the place of $x$ and $y$ standing in for itself,  we can construct a quasi-geodesic $\phi$ composed of the concatenation of five geodesic segments. By construction, in this case our constants are $a \leq 3 ,c\leq2,b \leq \sqrt{2}.$  By Lemma \ref{lem:genquasi} $\phi$ is a (27,0)--quasi-geodesic.  Furthermore, in this subcase our assumptions ensure that  $a+c-b \geq 1.5 - \sqrt{2},$ and hence again by Lemma \ref{lem:genquasi} it follows that $D \leq \frac{2M(27,0)}{1.5-\sqrt{2}}.$  This completes the proof of (3)$\implies$(1) in this subcase. 

\textbf{Case 2b: $B\leq \frac{N}{2}$} 

By the triangle inequality $N \leq D+B.$  Hence, by the assumption of the subcase, $D \geq \frac{N}{2}.$  On the other hand, let $z \in [w,y]$ such that $d(w,z)=\frac{99N}{100}$ [If this is not possible, namely $d(w,y)< \frac{99N}{100},$ set $z=y].$  Since by the previous step of (3)$\implies$(2), in conjunction with Lemma \ref{lem:close}, it follows that $\gamma$ is $(\frac{99}{100},K_{99/100})$--contracting, where the constant $K_{99/100}$ depends only on the Morse constant.  Then,

\begin{eqnarray*} 
\frac{N}{2} &\leq& D = d(\pi_{\gamma}(x),\pi_{\gamma}(y)) \leq d(\pi_{\gamma}(x),\pi_{\gamma}(z)) + d(\pi_{\gamma}(z),\pi_{\gamma}(y)) \\
&\leq& K_{99/100} +d(z,y) \leq K_{99/100} +N(\sqrt{2} - \frac{99}{100}) \\
&\implies& N \leq \frac{K_{99/100}}{\frac{1}{2}+\frac{99}{100} -\sqrt{2}} 
\end{eqnarray*}

However, since property [C1] of Lemma \ref{lem:cat} ensures that $D \leq N,$ it follows that in this subcase $D \leq \frac{K_{99/100}}{\frac{1}{2}+\frac{99}{100} -\sqrt{2}},$ thus completing the proof in this subcase and hence the proof of (3)$\implies$(1).

\begin{figure}[htpb] 
\centering
\includegraphics[height=5 cm]{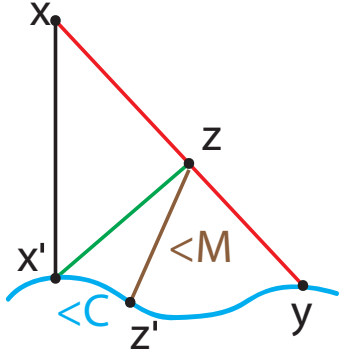}
\caption{(1)+(3)$\implies$(4).}\label{fig:impliesslim}
\end{figure}

\textbf{(1)+(3)$\implies$(4)}:  Fix $x \in X,$ $x' \in \pi_{\gamma}(x),$ and $y \in \gamma.$  Let $z=\pi_{[x,y]}(x'),$ and let $z' \in \pi_{\gamma}(z).$  Since $\pi_{[x,y]}(x)=x,$ and $\pi_{[x,y]}(x')=z,$ by property [C1] of Lemma \ref{lem:cat} it follows that $d(x,z)\leq d(x',x).$  By $C'$--contraction of $\gamma,$ it follows that $d(x',z')<C'.$  Furthermore, by Lemma \ref{lem:quasigeodesic}, the concatenated path $[x',z] \cup [z,y]$ is a (3,0)--quasi-geodesic.  In particular, it follows that $d(z',z)$ is bounded above by the Morse constant $M(3,0).$  Hence, $d(x',[x,y])=d(x',z) \leq d(x',z')+d(z',z) \leq C' +M(3,0).$  Thus, $\gamma$ is $(C'+M(3,0))$-slim, thus completing this step of the proof.  See Figure \ref{fig:impliesslim}.

\textbf{(4)$\implies$(2)}: Assume $\gamma$ is an S-slim, (K,L)--quasi-geodesic.  Fix $x \in X,$ $x' \in \pi_{\gamma}(x).$  Let $y \in X$ be any point such that  $d(x,y)\leq \frac{d(x,x')}{2K},$ and fix any $y' \in \pi_{\gamma}(y).$  We will show that $d(x',y')$ is bounded above by the constant $8S+6L$ thus showing that $\gamma$ is $(\frac{1}{2K}, 8S+6L)$-contracting.  

\begin{figure}[htpb] 
\centering
\includegraphics[height=5 cm]{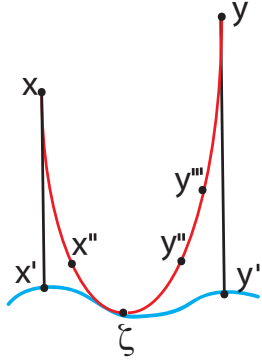}
\caption{(4) $\implies$(2).}\label{fig:slimimplies}
\end{figure}

Consider the function $f:X \ra \R$ defined by $f(a)=d(x',a) - d(y',a).$  Restricting the function $f$ to $\gamma|_{[x',y']}$ the function can have jump discontinuities of at most $2L,$ and hence by the intermediate value theorem, $\exists \zeta \in \gamma$ such that $d(y',\zeta)-L  \leq d(x',\zeta) \leq  d(y',\zeta)+L.$  See Figure \ref{fig:slimimplies}.  Without loss of generality we can assume $d(x',\zeta) \geq 2(S+L),$ for if not, then $d(x',y')\leq 4S+6L \leq 8S+6L,$ in which case we are done.  Furthermore since $\gamma$ is a (K,L)--quasi-geodesic, it follows that 
\begin{eqnarray}
2d(\zeta,x')-L &\leq& d(\zeta,x') + d(\zeta,y') \leq d_{\gamma}(x',y') \leq Kd(x',y') + L \\
\label{eq:recall} &\implies& d(\zeta,x') \leq \frac{Kd(x',y')}{2} +L
%\leq \frac{K(d(x',y') -L)}{2}
\end{eqnarray}  

Let $x''\in [x,\zeta]$ such that $d(\zeta,x'')=d(\zeta,x').$  Similarly, let $y''\in [y,\zeta]$ such that $d(\zeta,y'')=d(\zeta,y').$  By the remarks following Definition \ref{def:slim} of S--slim, $d(x',x''), d(y',y'')\leq 2S.$  

Let $y''' \in [\zeta,y]$ be such that $d(y''',\zeta)=\frac{d(\zeta,x'')d(y,\zeta)}{d(x,\zeta)}.$  Comparing the lengths of $[x,\zeta]$ and $[y,\zeta],$ we have the following inequality:
$$\frac{2k-1}{2k} |[x,\zeta]|   \leq  |[x,\zeta]| - \frac{ |[x,x']|}{2k} \leq |[y,\zeta]| \leq |[x,\zeta]| + \frac{ |[x,x']|}{2k} \leq  \frac{2k+1}{2k} |[x,\zeta]|.$$
In particular, since $\frac{2k-1}{2k} |[x,\zeta]|   \leq |[y,\zeta]| \leq  \frac{2k+1}{2k} |[x,\zeta]|,$ it follows that $d(y'',y''')\leq \frac{d(\zeta,x'')}{2K} +2L.$

Applying CAT(0) thinness of triangles to the triangle $\triangle(x,y,\zeta),$ it follows that $$|[x'',y''']| \leq \frac{d(\zeta,x'')}{d(x,\zeta)}\frac{d(x,x')}{2K} \leq \frac{d(\zeta,x'')}{2K}.$$  Putting things together the following inequality completes the proof:
\begin{eqnarray*}
d(x',y') &\leq& d(x',x'') + d(x'',y''')+d(y''',y'') + d(y'',y') \\
&\leq& 2S + \frac{d(\zeta,x'')}{2K} + \left(\frac{d(\zeta,x'')}{2K} +2L \right)+ 2S = 4S+ 2L + \frac{d(\zeta,x'')}{K} \\
&\leq& 4S+ 2L + (\frac{d(x',y')}{2} + \frac{L}{K}) \;\;\; [\mbox{ by  Equation \ref{eq:recall} }]\\
&\implies& d(x',y') \leq 8S +4L +\frac{2L}{K} \leq 8S + 6L.
\end{eqnarray*}        
%Since $\gamma$ is S--slim,   
%
%Let $D'=\min{d(x',\zeta),d(y',\zeta)}.$       

\end{proof}

Notice that of the four equivalent definitions of hyperbolic type quasi-geodesics considered in Theorem \ref{thm:main}, the Morse version is particular well suited with regard to quasi-isometries.  In particular, let $\gamma$ be a $M$--Morse quasi-geodesic.  Then for $f:X \ra Y$ a (K,L)--quasi-isometry, by definition $f(\gamma)$ is an $M'(K,L)$--Morse quasi-geodesic.  In light of Theorem \ref{thm:main} we immediately obtain the following corollary, which as noted in the introduction has application in \cite{charney}:
\begin{cor} \label{cor:quasiisometry}.
Let $X$ be a CAT(0) space, $\gamma \subset X$ a C--strongly contracting (K',L')--quasi-geodesic, and $f:X \ra X$ a $(K,L)$ quasi-isometry.  Then $f(\gamma)$ is $C'(C,K,L,K',L')$--strongly contracting quasi-geodesic. 
\end{cor}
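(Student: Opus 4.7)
The plan is to route the conclusion through the Morse condition, since as the paragraph preceding the corollary emphasizes, Morse is the one of the four equivalent conditions that is manifestly preserved under quasi-isometries. Concretely, I would argue in three steps.

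First, apply Theorem \ref{thm:main} to the hypothesis: since $\gamma$ is $C$--strongly contracting and a $(K',L')$--quasi-geodesic, the ``moreover'' clause of the theorem yields a Morse gauge $M=M(C,K',L')$ such that $\gamma$ is $M$--Morse.

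Second, transport the Morse property across $f$. Fix a quasi-inverse $\bar f\co X\to X$ for the $(K,L)$--quasi-isometry $f$; then $\bar f$ is a $(K'',L'')$--quasi-isometry with $K'',L''$ depending only on $K,L$, and there is a constant $D=D(K,L)$ with $d(\bar f\circ f(x),x)\leq D$ for all $x\in X$. Given any $(\tilde K,\tilde L)$--quasi-geodesic $\sigma$ with endpoints on $f(\gamma)$, push $\sigma$ back by $\bar f$: the image $\bar f(\sigma)$ is a $(\tilde K',\tilde L')$--quasi-geodesic (with new constants depending only on $\tilde K,\tilde L,K,L$) whose endpoints lie within distance $D$ of $\gamma$, so after prepending and appending short geodesic segments we obtain a quasi-geodesic with endpoints exactly on $\gamma$. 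By the Morse property of $\gamma$ this concatenation lies in $N_{M(\tilde K',\tilde L')}(\gamma)$; pushing forward by $f$ multiplies distances by at most $K$ and adds $L$, and $\gamma$ lies in a controlled neighbourhood of $f(\gamma)$ via $f\circ\bar f$. Collecting constants, $\sigma\subset N_{M'(\tilde K,\tilde L)}(f(\gamma))$ with $M'$ depending only on $M,K,L$, so $f(\gamma)$ is $M'$--Morse. Note also that $f(\gamma)$ is automatically a $(K''',L''')$--quasi-geodesic with $K''',L'''$ depending only on $K,L,K',L'$.

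Third, apply Theorem \ref{thm:main} in the opposite direction: since $f(\gamma)$ is an $M'$--Morse $(K''',L''')$--quasi-geodesic in the CAT(0) space $X$, the theorem provides a strong contraction constant $C'$ depending only on $M'$ and $(K''',L''')$, hence only on $C,K,L,K',L'$, as required.

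The only place where there is anything to do beyond invoking the main theorem is the middle step, and the minor obstacle there is bookkeeping: one must check that the Morse property really does transfer under a quasi-isometry with constants depending only on the original Morse gauge and the quasi-isometry constants. This is standard and follows from the fact that quasi-isometries send quasi-geodesics to quasi-geodesics with controlled constants and distort distances by a bounded multiplicative-plus-additive amount, so no genuinely new idea is needed.
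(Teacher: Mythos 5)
Your proposal is correct and takes essentially the same route as the paper: pass from strongly contracting to Morse via Theorem \ref{thm:main}, observe that the Morse property is preserved under quasi-isometries, and apply Theorem \ref{thm:main} again in the reverse direction. The paper treats the middle step (Morse is quasi-isometry invariant) as immediate from the definition, whereas you spell out the quasi-inverse bookkeeping, but the argument is the same.
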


%\section{Generalizations beyond CAT(0)}

\bibliographystyle{amsalpha}

\end{document}